\newcommand{\NN}{\mathbb{N}}
\newcommand{\ZZ}{\mathbb{Z}}
\newcommand{\ra}{\rightarrow}
\renewcommand{\aa}{\alpha}
\newcommand{\bb}{\beta}
\newtheorem{thm}{Theorem}
\theoremstyle{definition}
\newtheorem{theorem}[thm]{Theorem}
\newtheorem{lemma}[thm]{Lemma}
\newtheorem{definition}[thm]{Definition}
\newtheorem{proposition}[thm]{Proposition}
\newtheorem{remark}[thm]{Remark}
\newtheorem{corollary}[thm]{Corollary}
\numberwithin{thm}{section}
\title[$(s,s+2)$-core partitions with distinct parts]
{A bijective proof of Amdeberhan's conjecture on the number of $(s, s+2)$-core partitions with distinct parts}
\author{Jineon Baek}
\address{Jineon Baek, University of Michigan, Department of Mathematics,  
2074 East Hall,
530 Church Street,
Ann Arbor, MI 48109-1043}
\email{jineon@umich.edu}
\author{Hayan Nam}
\address{Hayan Nam, University of California, Irvine, Department of
Mathematics, 340 Rowland Hall, Irvine, CA 92697}
\email{hayann@uci.edu}
\author{Myungjun Yu}
\address{Myungjun Yu, University of Michigan, Department of Mathematics,  
2074 East Hall,
530 Church Street,
Ann Arbor, MI 48109-1043}
\email{myungjuy@umich.edu}
\begin{document}
\maketitle
\sloppy

\begin{abstract}
Amdeberhan conjectured that the number of $(s,s+2)$-core partitions with distinct parts for an odd integer $s$ is $2^{s-1}$. This conjecture was first proved by Yan, Qin, Jin and Zhou, then subsequently by Zaleski and Zeilberger. Since the formula for the number of such core partitions is so simple one can hope for a bijective proof. We give the first direct bijective proof of this fact by establishing a bijection between the set of $(s, s+2)$-core partitions with distinct parts and a set of lattice paths.
\end{abstract}

\section{Introduction}

A positive integer tuple $\lambda = (\lambda_1, \lambda_2, \ldots, \lambda_{\ell})$ is a partition of $n$ if $\sum_{i=1}^{\ell}\lambda_i = n$ and $\lambda_i$ is weakly decreasing. We visualize a partition using a Ferrers diagram (Figure \ref{ferres-diagram}). Each square in a Ferrers diagram is called a cell. We define the \emph{hook length} of a cell as the sum of the number of cells on its right, the number of cells below it, and 1 for itself. \\

\begin{figure}[h]
\centering
\begin{tikzpicture}[scale=0.50]
\fill [orange] (3, 2) rectangle (4, 3);
\fill [orange] (3, 3) rectangle (4, 4);
\fill [orange] (4, 3) rectangle (5, 4);
\fill [orange] (5, 3) rectangle (6, 4);
\fill [orange] (6, 3) rectangle (7, 4);
\draw (0,4) -- (6,4);
\draw (0,0) -- (0,4);
\draw (0,0) -- (1,0);
\draw (1,0) -- (1,4);
\draw (0,1) -- (3,1);
\draw (0,2) -- (6,2);
\draw (1,1) -- (1,4);
\draw (2,1) -- (2,4);
\draw (3,1) -- (3,4);
\draw (4,2) -- (4,4);
\draw (6,2) -- (6,4);
\draw (0,3) -- (6,3);
\draw (5,2) -- (5,4);
\draw (6,4) -- (7,4) -- (7,3) -- (6,3);
\node at (6.5, 3.5) {1};
\node at (5.5, 3.5) {3};
\node at (5.5, 2.5) {1};
\node at (4.5, 3.5) {4};
\node at (4.5, 2.5) {2};
\node at (3.5, 3.5) {5};
\node at (3.5, 2.5) {3};
\node at (2.5, 3.5) {7};
\node at (2.5, 2.5) {5};
\node at (2.5, 1.5) {1};
\node at (1.5, 3.5) {8};
\node at (1.5, 2.5) {6};
\node at (1.5, 1.5) {2};
\node at (0.5, 3.5) {10};
\node at (0.5, 2.5) {8};
\node at (0.5, 1.5) {4};
\node at (0.5, 0.5) {1};
\end{tikzpicture}
\caption{A Ferrers diagram of $\lambda = (7, 6, 3, 1)$ with the hook length of each cell.}
\label{ferres-diagram}
\end{figure}

A partition is called an \emph{$a$-core partition} (or simply an \emph{$a$-core}) if none of its hook lengths are divisible by $a$. For example, $\lambda = (7, 6, 3, 1)$ is a 9-core since no hook length is a multiple of 9. It is also an $a$-core for $a > 10$. If a partition is both an $a$-core and a $b$-core, then we say that the partition is an $(a,b)$-core. Similarly, we also say that a partition is an $(a_1, a_2, \ldots, a_n)$-core if it is simultaneously an $a_1$-core, an $a_2$-core, \ldots, and an $a_n$-core.

It is well-known that the number of $a$-core partitions is infinite. For example, for every positive integer $k$, the partition $\lambda = (k(a - 1) + 1, \ldots, 2(a - 1) + 1, (a - 1) + 1, 1)$ is an $a$-core. Anderson \cite{And02} has proved that the number of $(a,b)$-core partitions is equal to $\mathbf{Cat}_{a,b} = \frac{1}{a+b}{a+b \choose a}$ when $(a,b) =1$. After Anderson \cite{And02}, counting simultaneous core partitions has been a fascinating subject and studied by many authors \cite{article1}, \cite{article3}, \cite{Amd16}, \cite{article7}, \cite{article8}, \cite{article9}, \cite{article2}, \cite{article4}, \cite{article5}.

% Lemma level

We say a partition $\lambda$ has \emph{distinct parts} if all its parts $\lambda_1, \lambda_2, \cdots, \lambda_{\ell}$ of $\lambda$ are different from each other. Straub \cite{Str1} and Xiong \cite{Xiong} proved that the number of $(s,s+1)$-core partitions with distinct parts is equal to the $(s+1)\textsuperscript{st}$ Fibonacci number. Amdeberhan conjectured that when $s$ is odd, the number of $(s, s+2)$-core partitions with distinct parts is equal to $2^{s-1}$ (\cite[Conjecture 11.10]{Amd16}).
This was subsequently proved
by Yan, Qin, Jin and Zhou \cite{YQJZ} by an ingenious idea combining bijection arguments with manipulation of formulas involving binomials and Catalan numbers. Then Zaleski and Zeilberger \cite{ZZ}  gave another proof of this fact and found the first 22 moments of the distribution of sizes of the $(s, s+2)$-cores with distinct parts by using their symbolic-computational algorithms.

Despite the simplicity of the formula for the number of $(s,s+2)$-cores with distinct parts, no bijective proof has appeared in the literature. Inspired by Yan, Qin, Jin and Zhou's argument in \cite{YQJZ}, we give the first direct bijective proof of this fact, identifying $(s, s+2)$-core partitions with distinct parts with a particular kind of lattice paths.
We define a \emph{lattice path} of length $n$ to be a path in $\ZZ \times \ZZ$ from $(0, 0)$ to $(n, y)$ for some $y \in \ZZ$ using up steps $U = (1, 1)$ and down steps $D = (1, −1)$. Our main result is the following.
\begin{theorem}
\label{main-bijection}
For every odd integer $s > 0$,
there is a bijection between 
the set of $(s, s+2)$-core partitions with distinct parts
and the set of lattice paths of length $s$ from $(0, 0)$ to $(s, y)$, where $y$ varies over the positive integers.
\end{theorem}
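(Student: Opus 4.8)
The plan is to pass from partitions to their \emph{beta-sets} and reduce all three defining conditions to elementary statements about a finite set of positive integers. Given a partition $\lambda=(\lambda_1>\cdots>\lambda_\ell)$ with distinct parts, I would record the set of first-column hook lengths $H=\{\lambda_i+\ell-i:1\le i\le\ell\}$ and verify the three translations: (i) $\lambda$ has distinct parts if and only if $H$ contains no two consecutive integers, since consecutive first-column hook lengths satisfy $h_i-h_{i+1}=\lambda_i-\lambda_{i+1}+1$, which is $\ge 2$ exactly when the parts are distinct; (ii) $\lambda$ is an $s$-core if and only if $h\in H$ and $h\ge s$ force $h-s\in H$ (so in particular $s\notin H$); and (iii) $\lambda$ is an $(s+2)$-core if and only if $h\in H$ and $h\ge s+2$ force $h-(s+2)\in H$. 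Thus the objects to be counted are exactly the finite sets $H\subseteq\ZZ_{\ge 1}$ with no two consecutive elements that are closed under subtracting $s$ and $s+2$ while staying positive.

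Next I would feed $H$ into the abacus with $s$ runners. Writing each element as $r+js$ with $1\le r\le s-1$, condition (ii) says that on each runner the occupied positions form an initial segment $\{r,r+s,\dots,r+k_r s\}$, so the whole configuration is encoded by the top levels $(k_1,\dots,k_{s-1})$ with $k_r\in\{-1,0,1,\dots\}$ (level $-1$ meaning the runner is empty); runner $0$ is forced empty because no multiple of $s$ lies in $H$. In these coordinates the conditions become purely combinatorial: the distinct-parts condition (i) says that no two adjacent residues $r,r+1$ are simultaneously occupied, while the $(s+2)$-core condition (iii) becomes the chain inequalities $k_r\le k_{r-2}+1$ for $r\ge 2$ together with the wrap-around inequality $k_1\le k_{s-1}+2$, both coming from requiring $c_r-(s+2)$ to be a bead, where $c_r=r+k_r s$ is the top bead on runner $r$. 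I would check these translations on the first cases $s=3,5$, where they already reproduce the counts $4$ and $16$.

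With the reformulation in hand, the goal is to read a lattice path of length $s$ off the level data. The map $r\mapsto r-2\pmod s$ is a single $s$-cycle because $s$ is odd, and the $(s+2)$-condition is local along this cycle, whereas the distinct-parts condition is local in the natural order $1,2,\dots,s-1$. The construction I would attempt records, as one scans a canonical linear order of the $s$ runners, a step $U$ or $D$ according to the local behaviour of the top beads (occupied versus empty, together with the increment allowed by the chain inequalities), so that the $s$ runners produce exactly $s$ steps. The forced emptiness of runner $0$ (equivalently the anchor $c_0=-s$) should translate into the global constraint that more steps are $U$ than $D$, that is, that the path ends at positive height; as a numerical check, the number of length-$s$ paths ending above the axis is $\sum_{k>s/2}\binom{s}{k}=2^{s-1}$ by the symmetry of the binomial coefficients for odd $s$, matching Amdeberhan's count.

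The main obstacle is precisely the clash of the two localities: distinct parts and the $(s+2)$-core condition are natural in two different orders of the runners, and a single reading order must make both visible. Concretely, the levels $k_r$ are a priori unbounded—already for $s=5$ one meets $H=\{1,3,6,8,13\}$ with a runner at level $2$—yet the total information must compress to $s$ bits, so the hard part is to show that the chain inequalities, combined with the no-two-adjacent condition, force each local choice into a two-valued alternative that makes the reading well defined, and then to construct the inverse map rebuilding a valid level vector from an arbitrary positive-ending path. I expect the cleanest route is to find the linear order in which both conditions become nearest-neighbour constraints and then argue bijectivity step by step; failing a single such order, I would set up a recursive bijection that peels off one runner at a time and tracks its effect on the path, with the verification that no constraint is violated in either direction being the crux of the argument.
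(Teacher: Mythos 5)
Your reduction is sound as far as it goes: the translation of the three defining conditions into properties of the beta-set $H$, the $s$-runner abacus encoding by top levels $k_r$ (with $k_0=-1$ for the empty runner), the chain inequalities $k_r\le k_{r-2}+1$ with the wrap-around $k_1\le k_{s-1}+2$, and the count $\sum_{k>s/2}\binom{s}{k}=2^{s-1}$ of positive-ending paths are all correct, and this setup is equivalent to the one the paper reaches through order ideals of $P_{s,s+2}$ and $P'_{s,s+2}$ (Lemma \ref{cond-dist-parts} through Lemma \ref{ideal-characterization}). But the proof stops exactly where the theorem begins: no map from level vectors $(k_1,\dots,k_{s-1})$ to lattice paths is ever defined. ``A step $U$ or $D$ according to the local behaviour of the top beads'' is a placeholder, not a construction; nothing is argued about well-definedness, injectivity, or surjectivity; and you say yourself that the crux --- showing the constraints force a two-valued alternative at each runner and building the inverse map --- is left open, with a ``recursive bijection'' as an unspecified fallback. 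As it stands, the proposal establishes a known reformulation plus a numerical consistency check, not a bijection.

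The missing idea is the one the paper introduces to resolve precisely the obstacle you name (the distinct-parts condition and the $(s+2)$-core condition being local in two different orders on the runners): cut the poset into a left half $L$ and a right half $R$, embed $L$ order-preservingly and $R$ order-\emph{reversingly} into a single new poset $Q$, and take the \emph{complement} of the image of $I\cap R$ there. Under the map $I\mapsto f_L(I\cap L)\cup(Q^-\setminus f_R(I\cap R))$ of Theorem \ref{hard}, the no-two-consecutive-integers condition of Lemma \ref{ideal-characterization} becomes the single requirement that the image be an order ideal of $Q$, with a boundary (``balanced'') condition capturing the image exactly; the lattice path is then simply the height profile of that balanced ideal (Theorem \ref{bijection-dyck-path}). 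Complementing one half is what merges your two clashing localities into one nearest-neighbour constraint; without this device, or a genuine substitute for it, the plan as written does not yield a bijection.
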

Because the number of lattice paths in Theorem \ref{main-bijection} is exactly the half of the number of lattice paths of length $s$, the number of $(s, s+2)$-core partitions with distinct parts then follows as a corollary.
\begin{corollary}
\label{Amdeberhan-conj}
For every odd integer $s > 0$, the number of $(s, s + 2)$-core partitions with distinct parts is $2^{s−1}$.
\end{corollary}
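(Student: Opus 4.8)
The plan is to deduce the corollary directly from Theorem \ref{main-bijection} by a counting argument on the lattice-path side of the bijection. Since the theorem identifies the $(s,s+2)$-core partitions with distinct parts with the lattice paths of length $s$ that end at a \emph{strictly positive} height $y$, it suffices to count the latter set. First I would record that there are exactly $2^s$ lattice paths of length $s$ in total, because each of the $s$ steps is chosen independently to be either $U$ or $D$.

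The key step is a parity observation combined with a reflection symmetry. Writing $u$ for the number of up-steps and $d = s - u$ for the number of down-steps, a path of length $s$ ends at height $y = u - d = 2u - s$, so $y$ has the same parity as $s$. Because $s$ is odd, $y$ is always odd, and in particular $y \neq 0$; hence every one of the $2^s$ paths ends at either a strictly positive or a strictly negative height, with nothing in between. Now the map that swaps every $U$ with a $D$ (equivalently, reflects the path across the horizontal axis) is an involution on the set of all length-$s$ paths that sends a path ending at height $y$ to one ending at height $-y$. This gives a bijection between the paths of positive final height and those of negative final height, so the two classes are equinumerous; since together they exhaust all $2^s$ paths and no path lands at height $0$, each class contains exactly $2^s/2 = 2^{s-1}$ paths. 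Transporting this count back through Theorem \ref{main-bijection} yields $2^{s-1}$ for the number of $(s,s+2)$-core partitions with distinct parts.

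I do not expect a genuine obstacle here, since all of the substantive work is carried by Theorem \ref{main-bijection}; the corollary is a short enumeration. The one point that must be handled with care is the use of the hypothesis that $s$ is odd: it is precisely this parity assumption that rules out a path ending at height $0$, and thereby guarantees that the reflection partitions the $2^s$ paths cleanly into two equal halves with no leftover middle class. If $s$ were even, paths ending at height $0$ would exist and the simple factor-of-two split would fail, so I would flag the odd-$s$ condition as the essential (and only) subtlety to verify explicitly.
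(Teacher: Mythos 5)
Your proposal is correct and follows essentially the same route as the paper: the paper deduces the corollary in one sentence by noting that the paths ending at positive height are exactly half of all $2^s$ lattice paths of length $s$, which is precisely the fact you establish. Your write-up merely fills in the details the paper leaves implicit (the parity argument ruling out height $0$ when $s$ is odd, and the $U \leftrightarrow D$ reflection involution pairing positive-ending with negative-ending paths), so it is a correct and slightly more explicit version of the paper's own argument.
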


We use the well-known identification of a core partition with its $\beta$-set to rephrase Theorem \ref{main-bijection}. For a partition $\lambda$, the \emph{$\beta$-set} of $\lambda$ (denoted by $\beta(\lambda)$) is the sequence of hook
lengths of the boxes in the first column of $\lambda$.
For example, the $\beta$-set of the partition $\lambda = (7, 6, 3, 1)$ in Figure \ref{ferres-diagram} is $\beta(\lambda) = \{10, 8, 4, 1\}$. Given a (weakly) decreasing sequence of positive integers $h_1, h_2, \cdots, h_m$, it is
easily seen that the unique partition $\lambda$ with $\beta(\lambda) = \{h_1, h_2, \cdots, h_m\}$ is
$$\lambda = (h_1 − (m − 1), h_2 − (m − 2), . . . , h_{m−1} − 1, h_m).$$

Let $\NN$ be the set of non-negative integers.
Given a poset $(P, \le)$, an \emph{order ideal} $I$ of $P$ is a subset of $P$ such that for any $y \in I$, $x \leq y$ implies $x \in I$. Let
$$
P_{s,t} = \NN \setminus \{n \in \NN \mid n = a s + b t \textnormal{ for some } a, b \in \NN\}
$$
where the partial order is given by requiring that $x \in P_{s,t}$ \emph{covers} $y \in P_{s,t}$ if and only if $x−y$ is either $s$ or $t$. The map $\lambda \mapsto \beta(\lambda)$ from $(s, t)$-core partitions $\lambda$ to the order ideals $I = \beta(\lambda)$ of the poset $P_{s,t}$ is a bijection \cite{YQJZ}. The following characterization of $\beta$-sets of partitions with distinct parts was used in \cite{Xiong} and \cite{YQJZ}.
\begin{lemma}
\label{cond-dist-parts}
A partition $\lambda$ is a partition into distinct parts if and only if none of $x,y \in \beta(\lambda)$ satisfy $x-y=1$. 
\end{lemma}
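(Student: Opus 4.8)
The plan is to make the correspondence between a partition and its $\beta$-set completely explicit and then to translate the distinctness of parts into a condition on consecutive elements of the $\beta$-set. Write $\beta(\lambda) = \{h_1 > h_2 > \cdots > h_m\}$ in strictly decreasing order. By the inversion formula recalled just before the lemma, the parts of $\lambda$ are $\lambda_i = h_i - (m - i)$ for $1 \le i \le m$, equivalently $h_i = \lambda_i + (m - i)$. I would record this identity first, as everything else follows from it directly.

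Next I would compute the gap between consecutive $\beta$-set elements by a single substitution:
$$h_i - h_{i+1} = \bigl(\lambda_i + (m-i)\bigr) - \bigl(\lambda_{i+1} + (m-i-1)\bigr) = \lambda_i - \lambda_{i+1} + 1.$$
Since the parts are weakly decreasing, $\lambda_i - \lambda_{i+1} \ge 0$, so $h_i - h_{i+1} \ge 1$, which reconfirms that the $h_i$ are strictly decreasing and hence that $\beta(\lambda)$ is genuinely a set. The displayed identity shows moreover that $h_i - h_{i+1} = 1$ if and only if $\lambda_i = \lambda_{i+1}$; that is, an \emph{adjacent} pair of $\beta$-set elements differs by exactly $1$ precisely when the two corresponding parts of $\lambda$ coincide.

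The final step is to upgrade ``adjacent pair'' to ``some pair.'' Suppose $x, y \in \beta(\lambda)$ with $x - y = 1$, say $x = h_i$ and $y = h_j$ with $i < j$. Because the $h_k$ are strictly decreasing, $x - y = h_i - h_j \ge h_i - h_{i+1} \ge 1$, and the first inequality is strict unless $j = i+1$; hence $x - y = 1$ forces $j = i+1$, so any difference of $1$ in the $\beta$-set arises from an adjacent pair. Combining this with the previous step, the existence of $x, y \in \beta(\lambda)$ with $x - y = 1$ is equivalent to the existence of an index $i$ with $\lambda_i = \lambda_{i+1}$, which is exactly the failure of $\lambda$ to have distinct parts. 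Passing to contrapositives gives the stated equivalence.

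I expect the only point requiring care, rather than any real difficulty, to be this last reduction to adjacent pairs: one must invoke the strict monotonicity of the first-column hook lengths so that a difference of $1$ cannot ``skip over'' an intermediate $\beta$-set element. The remainder is the one-line bookkeeping computation of $h_i - h_{i+1}$.
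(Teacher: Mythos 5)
Your proof is correct, and it is the standard argument: the paper itself states Lemma \ref{cond-dist-parts} without proof (deferring to \cite{Xiong} and \cite{YQJZ}), and your computation via $h_i = \lambda_i + (m-i)$, giving $h_i - h_{i+1} = \lambda_i - \lambda_{i+1} + 1$, together with the observation that strict monotonicity forces any difference of $1$ to occur between adjacent elements, is exactly the expected justification. You handle the one subtle point (reducing ``some pair'' to ``adjacent pair'') explicitly, so there is no gap.
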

Let $\lambda$ be an $(s, s+2)$-core partition. The $\beta$-set of $\lambda$ gives an order ideal of $P_{s, s+2}$ containing no two adjacent numbers. For example, a $(9, 11)$-core partition $\lambda = (15, 7, 6, 3, 1)$ corresponds to the order ideal $I = \{19, 10, 8, 4, 1\}$ of $P_{9, 11}$. Figure \ref{order-ideal-example} denotes the Hasse diagram of 
$P_{9, 11}$ with the elements of the order ideal $I = \{19, 10, 8, 4, 1\}$ as white circles.

\begin{figure}[h]
\includegraphics[width=6cm]{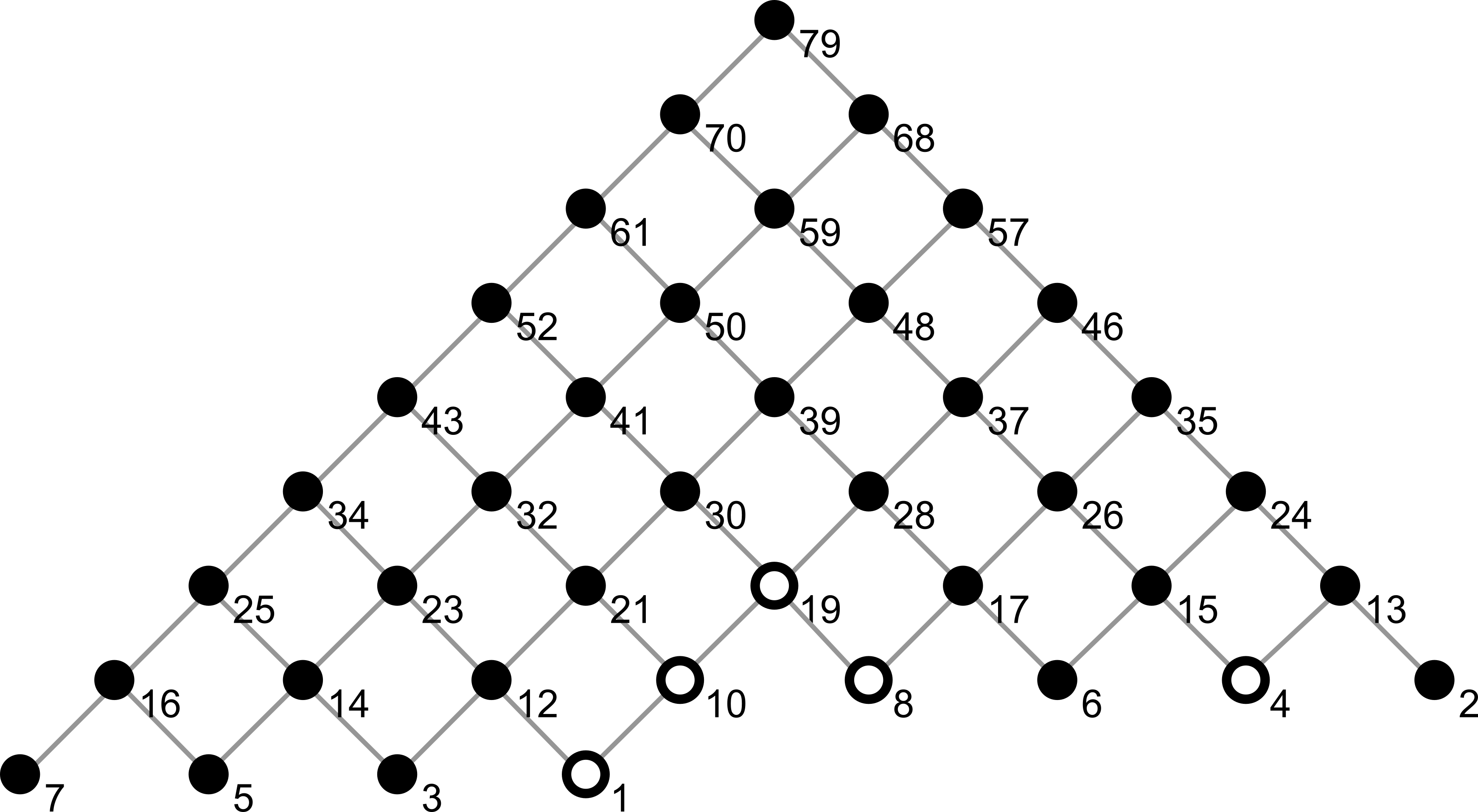}
\caption{The Hasse diagram of 
$P_{9, 11}$ with its order ideal $I = \{19, 10, 8, 4, 1\}$.}
\label{order-ideal-example}
\end{figure}

Using this identification, our main theorem can be rephrased as follows.

\begin{theorem}
\label{rephrase}
For every odd integer $s>0$, there is a bijection between 
the set of order ideals of $P_{s, s+2}$
containing no consecutive integers 
and the set of lattice paths of length $s$ from $(0, 0)$ to $(s, y)$, where $y$ varies over the positive integers.
\end{theorem}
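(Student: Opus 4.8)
The plan is to make the poset $P_{s,s+2}$ completely explicit and then read the lattice path off the boundary of an order ideal. Since $s$ is odd, $s$ and $s+2$ are coprime, so every gap $n\in P_{s,s+2}$ has a unique expression $n=as-c(s+2)$ with $0\le a\le s+1$ and $c\ge 1$; one checks from the Apéry set of $\langle s,s+2\rangle$ that $n$ is such a gap exactly when $2\le a\le s+1$ and $1\le c\le m_a:=\lfloor as/(s+2)\rfloor$. Under this coordinatization the covers $+s$ and $+(s+2)$ become $(a,c)\mapsto(a+1,c)$ and $(a,c)\mapsto(a,c-1)$, so $P_{s,s+2}$ is just the product order on a staircase region with the $s$ columns $a=2,\dots,s+1$ having heights $m_2,\dots,m_{s+1}=1,2,\dots,\frac{s-1}{2},\frac{s-1}{2},\dots,s-1$. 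Moreover $d:=a-c$ is raised by one under either cover, so the $s$ values $d=1,\dots,s$ are the rank levels. I expect the bulk of the setup to be establishing this dictionary between gaps and lattice points.

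With this picture an order ideal $I$ is exactly a top-justified sub-staircase, recorded by its nondecreasing floor $0\le q_2\le\cdots\le q_{s+1}$ with $q_a\le m_a$ (the ideal occupying the cells $q_a<c\le m_a$ in column $a$). The next step is to translate Lemma \ref{cond-dist-parts}. I would show that the consecutive pairs $\{n,n+1\}\subseteq P_{s,s+2}$ organize into families, one per pair of \emph{partner} columns (the partner of a column $a$ being $a+\frac{s+1}{2}$ or $a+\frac{s+3}{2}$ according to whether the $+1$ shift wraps around modulo $s+2$), and that within each family the two top cells $(a,m_a)$ and $(a',m_{a'})$ form the consecutive pair that is the last to survive in an order ideal. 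Since an order ideal contains a column's top cell as soon as that column is nonempty, this yields the clean reformulation: $I$ avoids consecutive integers if and only if, for each partner pair, at least one of the two columns is empty in $I$. Equivalently, the columns met by $I$ form an independent set of an explicit graph whose non-isolated part is a path $r_1-\ell_1-r_2-\ell_2-\cdots$ on $s-1$ of the columns (the middle column $\frac{s+3}{2}$ being isolated). I am fairly confident of this reduction; it is the combinatorial heart of the argument.

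It remains to manufacture the bijection with length-$s$ lattice paths. The plan is to scan the columns $a=2,\dots,s+1$ from left to right and emit one step per column, the up/down choice encoding simultaneously whether the column is empty and how its floor $q_a$ advances, arranged so that the terminal height of the path equals an explicit linear statistic of $(q_a)$. I would then prove that the independent-set (no-consecutive) condition is exactly equivalent to this terminal height being positive, and construct the inverse by rebuilding the floor sequence greedily from a given positive-ending path. As a running consistency check, the number of length-$s$ paths ending at positive height is $2^{s-1}$, which must match the count of admissible ideals.

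The main obstacle is that the two conditions defining the objects live most naturally in different coordinate systems: the order-ideal (monotonicity) condition is local in the column order $2,3,\dots,s+1$, whereas the no-consecutive condition couples columns that are far apart in that order through their partners. Reconciling these — finding the single left-to-right encoding under which monotonicity keeps the path well-defined while the partner/independent-set condition turns precisely into terminal positivity — is where the real work lies, and is the step I expect to require the most care, together with verifying that the map hits every positive-ending path exactly once.
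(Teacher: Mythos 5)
Your coordinatization of $P_{s,s+2}$ is correct: it is a sheared version of the paper's $P'_{s,s+2}$ (Lemma \ref{thm-pst-unique-repr} and Definition \ref{defn-new-poset}), and your column heights $1,2,\dots,\frac{s-1}{2},\frac{s-1}{2},\dots,s-1$ are right. Your reduction of the distinct-parts condition is also correct and provable exactly as you sketch: solving $\Delta_a s-\Delta_c(s+2)=1$ shows the consecutive pairs pair up columns at offset $\frac{s+1}{2}$ or $\frac{s+3}{2}$, each family of pairs is "parallel," and since an order ideal meets each column in a top-justified segment, a family contributes a consecutive pair to $I$ if and only if both of its columns are nonempty; hence admissibility is equivalent to the nonempty columns forming an independent set in the path graph you describe (with the middle column isolated). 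This is precisely the paper's Lemma \ref{ideal-characterization}, since "column nonempty" means "the ideal contains the corresponding minimal element $l_i$ or $r_j$."

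The genuine gap is exactly where you place the "real work": the bijection itself is never constructed, and it is the entire content of the theorem. You propose a left-to-right column scan emitting one $\pm 1$ step per column, with terminal positivity equivalent to independence, and a greedy inverse --- but no encoding rule, no statistic, and no injectivity/surjectivity argument is given, and the naive versions fail: a single binary step per column cannot record both the emptiness bit and the floor increment (in depth coordinates $d_a=m_a-q_a$, the constraint is only $d_{a+1}\le d_a+1$, so depths can drop arbitrarily between consecutive columns), while the independence condition couples columns at distance about $s/2$ in your scan order. The paper overcomes precisely this obstacle with its cut-and-flip construction (Theorem \ref{hard}): split $P'$ into the halves $L$ and $R$, keep $I\cap L$, replace $I\cap R$ by the complement of its image under the \emph{order-reversing} map $f_R$, and glue both into the strip poset $Q$ of Definition \ref{Qposet}. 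Under this gluing the partnered minimal elements become \emph{adjacent} in $Q$, so the no-consecutive condition becomes exactly the statement that the glued set is an order ideal, its balancedness encodes the remaining boundary constraints, and the lattice path is then read off the height profile (Theorem \ref{bijection-dyck-path}); note that in your coordinates this path scans anti-diagonals $a+c=\mathrm{const}$, not columns, so no column-by-column rule reproduces it. Some such complementation/reflection device (yours would have to be an equivalent of it) is indispensable and absent: as written you have established the analogues of the paper's preparatory lemmas up through Lemma \ref{ideal-characterization}, but not the analogues of Theorems \ref{hard} and \ref{bijection-dyck-path}, which are the substance of the proof.
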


% maybe add a separate picture for the order ideal?

The goal of the rest of this paper is to prove Theorem \ref{rephrase}. First we provide an overview of the bijection in section 2, and then we prove it in section 3.

\section{Overview of the bijection}

In this section, we give the overview of the bijection of Theorem \ref{rephrase}. The following figure (Figure \ref{bijection-overview}) illustrates this bijection for $s=13$.
\begin{figure}[h]
\includegraphics[width=12cm]{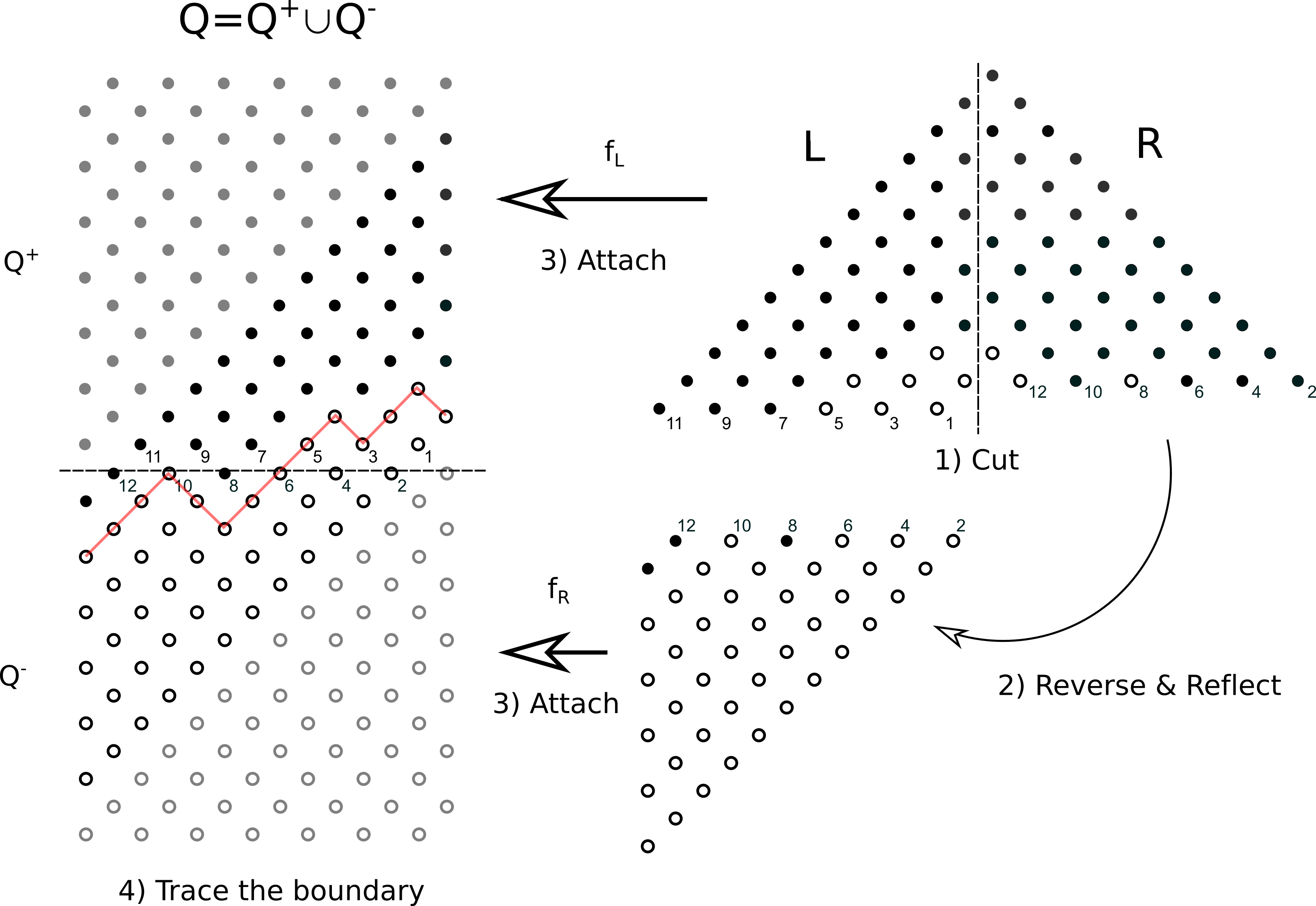}
\caption{An overview of the bijection of Theorem \ref{rephrase}}
\label{bijection-overview}
\end{figure}

The dots in the upper right corner of Figure \ref{bijection-overview} denote the poset $P_{s, s+2}$. 
The elements of our order ideal $I$ of $P_{s, s+2}$ containing no consecutive integers are colored white, and the others are colored black.

We first start by cutting the poset $P_{s, s+2}$ into two pieces in the middle (Step 1). Then we take the right part of the poset, switch the colors of dots (white to black and vice versa) and reflect everything horizontally (Step 2).
Then we assemble the left part and the reflected right part so that the integer $1$ is to the one step up and to the right of the integer $2$ (Step 3). It follows that any two adjacent integers appearing as the minimal elements of the poset are as close as possible to each other. For example, two integers $5$ and $6$ in Figure \ref{bijection-overview} are away by one `unit distance' after the assembling. 

Then $I$ corresponds to an ideal $J$ (collection of white dots) in the new poset $Q$, depicted on the left side of Figure \ref{bijection-overview}. The partial order of $Q$ is given in Definition \ref{Qposet}. The ideal $J$ satisfies a condition we call \emph{balanced}, and it turns out that the map is bijective (Theorem \ref{hard}).
Finally, we show that there is a bijection between
the set of balanced order ideals and the set
of lattice paths of length $s$
from $(0, 0)$ to $(s, y)$, where $y$ varies over the positive integers, by tracing the `\emph{boundary}' $P$ of the balanced order ideal $J$ (Step 4, Theorem \ref{bijection-dyck-path}).

\section{Proof of Theorem \ref{rephrase}}

The following description of $P_{s, t}$ when $s$ and $t$ are coprime was used by Anderson \cite{And02} to prove that the number
of $(s, t)$-core partitions is the generalized Catalan number $\frac{1}{s + t} \binom{s + t}{s}$. Let $F_{s,t} = st-s-t$ denote the Frobenius number, the largest positive integer that cannot be written as a linear combination of $s$ and $t$ with non-negative coefficients.

\begin{lemma}
\label{thm-pst-unique-repr}
Let $s$ and $t$ be positive coprime integers. Then 
\[
P_{s,t} = \{x \in \NN \mid x = F_{s, t} - as - bt \textnormal{ for some } a, b \in \NN\}.
\]
Any element $x \in P_{s, t}$ is \emph{uniquely} represented as $x = F_{s, t} - as - bt$ for some $a, b \in \NN$. Define the ordering of $P_{s,t}$ so that $ F_{s, t} - a_1s - b_1t \le F_{s, t} - a_0s - b_0t$ if and only if
$a_0 \leq a_1$ and $b_0 \leq b_1$.
\end{lemma}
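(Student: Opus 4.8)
The plan is to rest everything on the unique-representation structure that coprimality forces. First I would record that, since $\gcd(s,t)=1$, every integer $n$ has a unique expression $n = as + bt$ with $a \in \{0,1,\dots,t-1\}$ and $b \in \ZZ$: take $a \equiv n s^{-1} \pmod t$ in the range $\{0,\dots,t-1\}$, and then $b = (n-as)/t$ is forced. Writing $S = \{as+bt : a,b\in\NN\}$ for the numerical semigroup, so that $P_{s,t} = \NN\setminus S$, the first substantive step is the membership criterion: for $n\ge 0$ one has $n\in S$ if and only if this distinguished $b$ is non-negative. The forward direction is immediate; for the converse, if $n = a's+b't$ with $a',b'\in\NN$, writing $a' = qt+a$ with $q\ge 0$ and $a\in\{0,\dots,t-1\}$ and absorbing $q$ into the $t$-coefficient shows the distinguished representative also has $b\ge 0$.

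The heart of the argument is a reflection identity. Using the given value $F := F_{s,t} = st-s-t = s(t-1)+t(-1)$, the involution $(a,b)\mapsto (t-1-a,\,-1-b)$ carries the distinguished representation of $z$ to that of $F-z$, since $t-1-a$ again lies in $\{0,\dots,t-1\}$. As $b\ge 0$ is equivalent to $-1-b\le -1$, the membership criterion then yields the symmetry $z\in S \iff F-z\notin S$ for every integer $z$. Applying this with $z=x$ for $x\ge 0$ gives $x\in P_{s,t} \iff x\notin S \iff F-x\in S \iff x = F-as-bt$ for some $a,b\in\NN$, which is exactly the claimed set equality.

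Uniqueness is then short. If $F-x = a_1 s + b_1 t = a_2 s + b_2 t$ with all coefficients in $\NN$, then $(a_1-a_2)s = (b_2-b_1)t$, so $a_1-a_2 = kt$ and $b_2-b_1 = ks$ for some $k\in\ZZ$; if $k\neq 0$ then one of the two representations forces $a\ge t$ or $b\ge s$, whence $F-x\ge st > F$, contradicting $x\ge 0$. Thus $k=0$. The same estimate shows every gap $x$ has coordinates with $0\le a\le t-1$ and $0\le b\le s-1$, and identifies the coordinate region as the down-closed staircase $R = \{(a,b)\in\NN^2 : as+bt\le F\}$.

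Finally, to confirm that the coordinate order in the statement agrees with the poset structure defined earlier by the covering relation $x-y\in\{s,t\}$, I would translate covers into coordinates: computing $x-y$ for $x=F-as-bt$ and $y=F-a's-b't$ and invoking the bounds $a,a'\in\{0,\dots,t-1\}$ forces $x-y=s \iff (a',b')=(a+1,b)$ and $x-y=t \iff (a',b')=(a,b+1)$, so each cover decreases exactly one coordinate by one. Because $R$ is down-closed, any two gaps with $(a_0,b_0)\le(a_1,b_1)$ componentwise are joined by a monotone lattice path of covers lying entirely in $R$, so the transitive closure of the covering relation is precisely componentwise comparison, matching $F-a_1s-b_1t \le F-a_0s-b_0t \iff a_0\le a_1,\ b_0\le b_1$. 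I expect the reflection step to be the main obstacle, since it is what simultaneously delivers the set equality and, through the resulting coordinate bounds, both the uniqueness and the order consistency; the remaining work is bookkeeping around the unique representation modulo $t$.
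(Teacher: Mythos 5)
Your proof is correct, but note that there is nothing in the paper to compare it against: the paper states this lemma as known background, citing Anderson, and gives no proof at all. Your argument is the classical one for numerical semigroups generated by two coprime elements (Sylvester/Anderson): unique representation $n = as+bt$ with $a \in \{0,\dots,t-1\}$, $b \in \ZZ$; the membership criterion ``$n \in S$ iff the distinguished $b$ is non-negative''; and the reflection $(a,b) \mapsto (t-1-a,\,-1-b)$ giving the symmetry $z \in S \iff F_{s,t} - z \notin S$, from which the set equality, the uniqueness, the coordinate bounds $0 \le a \le t-1$, $0 \le b \le s-1$, and the identification of the covering relation all follow. All the steps check out, including the part most proofs in the literature skip: verifying that the transitive closure of the covering relation $x - y \in \{s,t\}$ agrees with reversed componentwise order, via the monotone lattice path inside the down-closed region $\{(a,b) \in \NN^2 : as+bt \le F_{s,t}\}$ --- this is genuinely needed for the lemma's last sentence to be consistent with the poset structure on $P_{s,t}$ defined in the introduction, and your down-closedness argument is exactly what makes it work. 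Two small points you should make explicit in a final write-up: (i) the membership criterion as you stated it is for $n \ge 0$, but you invoke the symmetry ``for every integer $z$,'' so record the trivial observation that for $n<0$ both sides fail (then $n \notin S$ and the distinguished $b$ is negative); (ii) in the cover computation, ruling out the spurious solution (e.g.\ $a' = a+1-t$, $b' = b+s$ in the case $x - y = s$) uses the bound on the $b$-coordinates as well as on the $a$-coordinates, which you have, but your prose mentions only the $a$-bounds. Neither affects correctness.
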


\begin{definition}
\label{defn-new-poset}
Let 
\[
P'_{s, t} = \{(a, b) \in \NN \mid as + bt \leq F_{s, t} \}
\]
with the following poset structure: if $(a_0, b_0), (a_1, b_1) \in P'_{s, t}$, then $(a_1, b_1) \le (a_0, b_0)$ if and only if $a_0 \leq a_1$ and $b_0 \leq b_1$.
\end{definition}

By Lemma \ref{thm-pst-unique-repr} and Definition \ref{defn-new-poset},
it is easily seen that the map 
$$\psi \colon P_{s, t} \rightarrow P'_{s, t}$$ 
with $\psi(F_{s, t} - as - bt) = (a,b)$ is an isomorphism of posets. 

\begin{remark}
The posets $P_{s,t}$ and $P'_{s,t}$ are essentially the same. However, we mainly work with $P'_{s,t}$ since it helps to keep track of elements easily.
\end{remark}

For our main purpose, we fix $s = 2k+1$, and $t=2k+3$ for $k\in \NN$. The following lemma gives a complete description of the elements of $P'_{2k+1, 2k+3}$.

\begin{lemma}
\label{p2k12k3-desc}
We have
$$
P'_{2k+1, 2k+3} = \{(a, b) \in \NN \mid 
a + b \leq 2k - 1 \textnormal{ if } b \geq k \textnormal{ and }
a + b \leq 2k \textnormal{ if } b < k \}.
$$
\end{lemma}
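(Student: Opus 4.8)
The plan is to unwind the definition $P'_{2k+1,2k+3} = \{(a,b) \in \NN \mid (2k+1)a + (2k+3)b \le F_{s,t}\}$ and show the Frobenius-number inequality is equivalent to the two-case linear conditions stated. First I would compute the Frobenius number explicitly: since $s = 2k+1$ and $t = 2k+3$ are coprime (their difference is $2$ and both are odd), we have
\[
F_{s,t} = st - s - t = (2k+1)(2k+3) - (2k+1) - (2k+3) = 4k^2 + 4k - 1.
\]
Thus the defining inequality of $P'_{s,t}$ reads $(2k+1)a + (2k+3)b \le 4k^2 + 4k - 1$. My goal is to rewrite the left-hand side so that the dependence on the single parameter $a+b$ becomes transparent.

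The key algebraic step is to factor out the common odd base. Writing $(2k+1)a + (2k+3)b = (2k+1)(a+b) + 2b$, the inequality becomes
\[
(2k+1)(a+b) + 2b \le 4k^2 + 4k - 1 = (2k+1)(2k) - 1.
\]
Setting $n = a + b$, this is $(2k+1)n \le (2k+1)(2k) - 1 - 2b$, i.e. $(2k+1)(n - 2k) \le -1 - 2b$. Since the right side is negative, we must have $n \le 2k$, and the whole problem reduces to a careful integer (ceiling/floor) analysis of when $(2k+1)n + 2b \le 4k^2 + 4k - 1$ holds, given the constraint $0 \le b \le n$.

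I would then split into the two regimes dictated by the statement. For $n = a+b = 2k$ (the extremal case) the inequality $(2k+1)(2k) + 2b \le (2k+1)(2k) - 1$ forces $2b \le -1$, which is impossible, so no point with $a+b = 2k$ survives \emph{unless} something smaller is at play — this tells me the real boundary sits at $a+b \le 2k$ with an extra restriction. Concretely, for $a + b = 2k$ one checks the inequality fails for every $b$, whereas for $a+b \le 2k-1$ one verifies $(2k+1)(a+b) + 2b \le (2k+1)(2k-1) + 2b$ and then bounds $2b$; the two cases $b \ge k$ and $b < k$ arise precisely from comparing $2b$ against the slack $4k^2 + 4k - 1 - (2k+1)(a+b)$. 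I expect the cleanest route is: fix $b$, solve for the largest admissible $a+b$, and observe that the threshold jumps by one exactly as $b$ crosses $k$, giving $a + b \le 2k-1$ when $b \ge k$ and $a+b \le 2k$ when $b < k$.

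The main obstacle will be handling the parity/rounding carefully so that the strict-versus-weak boundary between the two cases ($b \ge k$ versus $b < k$) lands exactly where claimed, rather than off by one. I would manage this by verifying both inclusions explicitly: given $(a,b)$ satisfying the Frobenius inequality I deduce the stated case conditions, and conversely given the case conditions I substitute back to confirm $(2k+1)a + (2k+3)b \le 4k^2 + 4k - 1$. Checking the two boundary values $b = k$ and $b = k-1$ by hand should pin down the dividing line and rule out any off-by-one error; the rest is a routine monotonicity argument in $a$ and $b$.
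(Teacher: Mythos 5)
Your overall strategy is the same as the paper's: unwind the definition of $P'_{2k+1,2k+3}$, compute $F_{2k+1,2k+3}=4k^2+4k-1$, rewrite $(2k+1)a+(2k+3)b=(2k+1)(a+b)+2b$, and split into cases according to $b$. However, your execution contains an arithmetic error that derails the argument: you assert $4k^2+4k-1=(2k+1)(2k)-1$, whereas $(2k+1)(2k)-1=4k^2+2k-1$. The useful identity is $4k^2+4k-1=(2k+1)^2-2$. From the false identity you conclude that $a+b=2k$ forces $2b\le -1$, hence that ``no point with $a+b=2k$ survives,'' and later that ``for $a+b=2k$ one checks the inequality fails for every $b$.'' This is false and contradicts the very lemma you are proving: for $k\ge 1$ the point $(2k,0)$ satisfies $(2k+1)(2k)=4k^2+2k\le 4k^2+4k-1$, so it lies in $P'_{2k+1,2k+3}$ with $a+b=2k$ and $b<k$. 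Your attempt to patch this (``unless something smaller is at play \dots the real boundary sits at $a+b\le 2k$ with an extra restriction'') does not follow from your algebra; it is an appeal to the statement you are trying to prove.

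The fix is one line. Writing the defining inequality as $(2k+1)(a+b)+2b\le (2k+1)^2-2$ and dividing by $2k+1$ gives
\[
a+b \le (2k+1) - \frac{2+2b}{2k+1},
\]
which is exactly the displayed inequality in the paper's proof. Since $a+b$ is an integer, the conclusion now falls out of the size of the fractional term: if $b\le k-1$ then $0<\frac{2+2b}{2k+1}<1$, so membership is equivalent to $a+b\le 2k$; if $b\ge k$ then $\frac{2+2b}{2k+1}>1$, so membership forces $a+b\le 2k-1$, and conversely $a+b\le 2k-1$ (which entails $b\le 2k-1$, hence $\frac{2+2b}{2k+1}\le 2$) suffices. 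Your concluding fallback---verifying both inclusions directly and hand-checking $b=k-1$ and $b=k$---would indeed have caught the slip, but as written the central computation and the intermediate claim it produces are wrong, so the proposal does not constitute a proof.
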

\begin{proof}
By Definition \ref{defn-new-poset}, $(a, b)$ is in $P'_{2k+1, 2k+3}$ if and only if
$$(2k+1)a + (2k+3)b \leq F_{2k+1, 2k+3} = (2k+1)(2k+3) - (2k+1)-(2k+3),$$
or
\begin{equation*}
a+b \leq \frac{(2k+1)^2 - 2 - 2b}{2k+1}.
\end{equation*}
Now the lemma follows. 
\end{proof}

We divide $P'_{2k+1, 2k+3}$ into the `left part' $L$ and `right part' $R$ as follows.
\begin{definition}
Let 
$$
L = \{(a, b) \in P'_{2k+1,2k+3} \mid a > b \textnormal{ and } a + b \leq 2k\}
$$
and
$$
R = \{(a, b) \in P'_{2k+1,2k+3} \mid a \leq b \textnormal{ and } a + b \leq 2k - 1\}.
$$
\end{definition}
Then by Lemma \ref{p2k12k3-desc}, $P'_{2k+1, 2k+3}$ is a disjoint union of sets $L$ and $R$.
The sets $L$ and $R$ have naturally induced poset structures. The following picture illustrates $P_{9,11}$ and $P'_{9,11}$ ($k=4$). 

\begin{figure}[h]
\includegraphics[width=12cm]{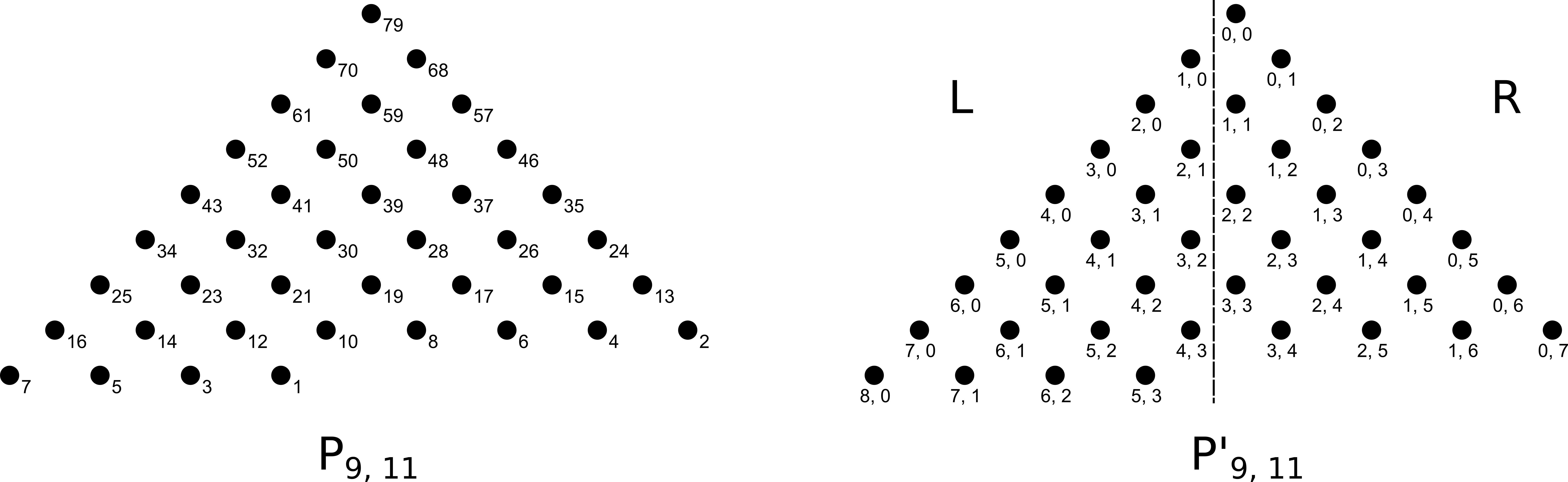}
\caption{$P_{9, 11}$ and $P'_{9, 11}$}
\label{poset-figure}
\end{figure}

\begin{comment}
How about deleting the following?

For a given poset $(P, \le)$ and $x,y \in P$ with $y \le x$, recall that we say $x$ \emph{covers} $y$ if $y \le z \le x$ implies $z=y$ or $z=x$.

\begin{remark}
For any $(a, b) \in L$, the only elements $(a, b)$ can cover
are $(a+1, b)$ and $(a, b+1)$. If they exist in $L$, then $(a, b)$
indeed covers them. Likewise, the only elements $(a, b) \in R$ covers
are $(a+1, b)$ and $(a, b+1)$.
\end{remark}
\end{comment}

\begin{lemma}
Let $l_i = (k + i, k - i) \in L$ for all $1 \leq i \leq k$,
and $r_j = (j - 1, 2k - j) \in R$ for all $1 \leq j \leq k$.
The $l_i$'s are all the minimal elements of $L$ and
$r_j$'s are all the minimal elements of $R$.
\end{lemma}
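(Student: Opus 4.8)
The plan is to exploit that, in the order on $P'_{s,t}$ and hence on its sub-posets $L$ and $R$, a larger coordinate means a \emph{smaller} element: since $(a_1,b_1)\le(a_0,b_0)$ means $a_0\le a_1$ and $b_0\le b_1$, the only candidates lying strictly below $(a,b)$ are obtained by increasing a coordinate, the immediate ones being $(a+1,b)$ and $(a,b+1)$. Because $as+bt\le F_{s,t}$ is preserved under decreasing coordinates, $P'$ is downward closed in the usual coordinatewise order on $\NN^2$, and the same staircase shape is inherited by $L$ and $R$. It follows that $(a,b)\in L$ is minimal in $L$ if and only if neither $(a+1,b)$ nor $(a,b+1)$ lies in $L$, and likewise for $R$. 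This reduces the lemma to a direct membership check.

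First I would rewrite $L$ and $R$ in the simplest closed form. Using Lemma \ref{p2k12k3-desc}, the condition $a>b$ already forces $b<k$ (otherwise $a+b\ge 2k+1$), so $L=\{(a,b)\in\NN^2 \mid a>b,\ a+b\le 2k\}$, while $R=\{(a,b)\in\NN^2 \mid a\le b,\ a+b\le 2k-1\}$, both automatically inside $P'$. A one-line computation then confirms $l_i=(k+i,k-i)\in L$ and $r_j=(j-1,2k-j)\in R$ for $1\le i,j\le k$, with the $l_i$ lying on the antidiagonal $a+b=2k$ and the $r_j$ on the antidiagonal $a+b=2k-1$.

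For the minimality I would argue in two directions. For $L$: if $(a,b)\in L$ has $a+b=2k$, then both $(a+1,b)$ and $(a,b+1)$ have coordinate sum $2k+1>2k$ and hence leave $L$, so $(a,b)$ is minimal; conversely, if $a+b\le 2k-1$, then $(a+1,b)$ still satisfies $a+1>b$ and $(a+1)+b\le 2k$, so $(a+1,b)\in L$ and $(a,b)$ is not minimal. Thus the minimal elements of $L$ are exactly the points of $L$ with $a+b=2k$, and enumerating these (with $a>b$, $b\ge 0$) gives precisely $\{l_i : 1\le i\le k\}$. The argument for $R$ is identical with $2k$ replaced by $2k-1$: points with $a+b=2k-1$ are minimal since both increments push the coordinate sum to $2k>2k-1$, while any point with $a+b\le 2k-2$ admits $(a,b+1)\in R$ and so is not minimal; enumerating the antidiagonal $a+b=2k-1$ with $a\le b$ yields exactly $\{r_j:1\le j\le k\}$.

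The only place requiring care — the \textbf{main obstacle}, though a mild one — is checking that, when we increment a coordinate, the side constraint ($a>b$ for $L$, $a\le b$ for $R$) is not what ejects the point: incrementing $a$ only strengthens $a>b$ in $L$, and incrementing $b$ only strengthens $a\le b$ in $R$. Hence in each case the binding constraint is the coordinate-sum bound, and the neighbor test reduces cleanly to comparing $a+b$ against $2k$ or $2k-1$. Everything else is routine verification.
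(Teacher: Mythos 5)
Your proof is correct and takes the same route as the paper: the paper's own proof consists of the single line that the lemma ``follows directly from Lemma \ref{p2k12k3-desc}'', and your argument simply fills in exactly those membership checks (reduction to the antidiagonals $a+b=2k$ and $a+b=2k-1$, plus the observation that incrementing the appropriate coordinate never violates the side constraints $a>b$ or $a\le b$). Nothing to correct.
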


\begin{proof}
This lemma follows directly from Lemma \ref{p2k12k3-desc}.
\begin{comment}
a complete proof:

Take any $(a, b) \in L$. By the definition of $L$,
if $a + b$ is less than $2k$ then $(a+1, b) \in L$ too.
Therefore, if $(a, b)$ is a minimal element of $L$ then
$a + b = 2k$. Conversely, if $a + b = 2k$ and $0 \leq b < k$
then this implies $(a, b) \in L$ and neither $(a+1, b)$
nor $(a, b+1)$ are in $L$. 
Therefore, $(a, b) \in L$ is minimal if and only if 
$a > b$ and $a + b = 2k$. If we let $i = k - b$, then
$(a, b) = (k + i, k - i)$ for all $1 \leq i \leq k$.
\end{comment}
\end{proof}

\begin{lemma}
\label{thm-small-numbers-in-p2k12k3}
Suppose that $x$ is an integer such that $1\le x \le 2k$. Then $x$ lies in $P_{2k+1, 2k+3}$. Moreover, if $x$ is odd, then $\psi(x) = l_{(x+1) / 2} \in L$.
If $x$ is even, then $\psi(x) = r_{x / 2} \in R$.
\end{lemma}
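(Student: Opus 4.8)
The plan is to prove Lemma \ref{thm-small-numbers-in-p2k12k3} by direct computation using the explicit unique representation guaranteed by Lemma \ref{thm-pst-unique-repr}. Recall that $\psi(x) = (a,b)$ precisely when $x = F_{s,t} - as - bt$ with $a,b \in \NN$. So the entire task reduces to writing each small $x$ in the range $1 \le x \le 2k$ in the form $F_{2k+1,2k+3} - a(2k+1) - b(2k+3)$ for suitable non-negative integers $a,b$, and then confirming that the resulting pair $(a,b)$ matches the claimed minimal element $l_{(x+1)/2}$ or $r_{x/2}$.

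First I would record the Frobenius number in a convenient form. From Lemma \ref{p2k12k3-desc}'s proof we have $F_{2k+1,2k+3} = (2k+1)(2k+3) - (2k+1) - (2k+3) = 4k^2 + 4k - 1$. Next I would handle the odd case: for $x = 2i - 1$ with $1 \le i \le k$, the claim is $\psi(x) = l_i = (k+i, k-i)$, so I need to verify the identity
\[
F_{2k+1,2k+3} - (k+i)(2k+1) - (k-i)(2k+3) = 2i - 1.
\]
Expanding the left-hand side, the coefficient of $i$ contributes $-i(2k+1) + i(2k+3) = 2i$, and the remaining constant terms collapse to $-1$, giving exactly $2i - 1$. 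Similarly, for the even case $x = 2j$ with $1 \le j \le k$, the claim is $\psi(x) = r_j = (j-1, 2k-j)$, and I would check
\[
F_{2k+1,2k+3} - (j-1)(2k+1) - (2k-j)(2k+3) = 2j,
\]
which follows by the same kind of bookkeeping. Since every integer $x$ with $1 \le x \le 2k$ is either $2i-1$ for a unique $i \in \{1,\dots,k\}$ or $2j$ for a unique $j \in \{1,\dots,k\}$, these two computations cover all cases.

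The only genuine content beyond arithmetic is justifying that $x \in P_{2k+1,2k+3}$ in the first place, i.e. that the exhibited pairs $(k+i, k-i)$ and $(j-1, 2k-j)$ actually lie in $P'_{2k+1,2k+3}$, equivalently that $x$ is \emph{not} of the form $as + bt$ with $a,b \ge 0$. But once I produce the representation $x = F_{2k+1,2k+3} - as - bt$ with $a, b \in \NN$, membership follows immediately from the first assertion of Lemma \ref{thm-pst-unique-repr}, which characterizes $P_{s,t}$ as exactly the set of such numbers. The pairs $l_i$ and $r_j$ are already known to lie in $L$ and $R$ respectively by the preceding lemma, so no separate verification of the poset constraints in Lemma \ref{p2k12k3-desc} is needed.

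I do not anticipate a real obstacle here; the statement is essentially a normalization lemma that pins down where the small integers $1, 2, \dots, 2k$ sit among the minimal elements of $L$ and $R$, and its proof is a pair of linear identities in $k$ and the index. The mild subtlety to get right is simply matching the indexing conventions: confirming that as $x$ runs over odd values $1, 3, \dots, 2k-1$ the index $i = (x+1)/2$ runs over $1, \dots, k$ hitting each $l_i$ exactly once, and likewise that even values $2, 4, \dots, 2k$ give $j = x/2$ running over $1, \dots, k$ and hitting each $r_j$ exactly once. This parity-based pairing is precisely what makes the later assembly step (aligning the integer $1$ just above and to the right of $2$) work, so it is worth stating cleanly even though the verification itself is routine.
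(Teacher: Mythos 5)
Your proposal is correct and follows essentially the same route as the paper: the paper's proof likewise computes $F_{2k+1,2k+3} = 4k^2+4k-1$ and exhibits the explicit representations $x = F_{2k+1,2k+3} - (k+(x+1)/2)(2k+1) - (k-(x+1)/2)(2k+3)$ for odd $x$ and $x = F_{2k+1,2k+3} - (x/2-1)(2k+1) - (2k-x/2)(2k+3)$ for even $x$, from which membership in $P_{2k+1,2k+3}$ and the identification with $l_{(x+1)/2}$, $r_{x/2}$ follow via Lemma \ref{thm-pst-unique-repr}. Your write-up just spells out the arithmetic and indexing checks that the paper leaves as an observation.
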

\begin{proof}
Recall that $F_{2k+1, 2k+3} = (2k+1)(2k+3)-(2k+1)-(2k+3) = 4k^2+4k-1$. The lemma follows from the following observation: For $1 \leq x \leq 2k$, we have
\[
x =
\begin{cases}
F_{2k+1, 2k+3} - (k+(x+1)/2)(2k+1) - (k-(x+1)/2)(2k+3) & \text{if $x$ is odd}, \\
F_{2k+1, 2k+3} - (x / 2 - 1)(2k+1) - (2k - x / 2)(2k+3) & \text{if $x$ is even}.
\end{cases}
\]
\end{proof}

In the following lemma, we rewrite the condition that `an order ideal in $P_{2k+1,2k+3}$ has no two adjacent numbers' in terms of the corresponding order ideal in $P'_{2k+1,2k+3}$. We use this condition in the proof of Theorem \ref{rephrase}.

\begin{lemma}
\label{ideal-characterization}
An order ideal $I$ 
of $P_{2k+1, 2k+3}$ contains no two adjacent numbers
if and only if the order ideal $I' = \psi(I)$ of $P'_{2k+1,2k+3}$ contains no two elements of the form $l_{x_0}$ and $r_{x_0}$
for some $1 \leq x_0 \leq k$, or of the form $r_{x_1}$ and $l_{x_1 + 1}$ for some $1 \leq x_1 < k$.
% fill this out
\end{lemma}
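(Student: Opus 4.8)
The plan is to push everything through the poset isomorphism $\psi$ and reduce the adjacency condition to a statement about the minimal elements $l_i$ and $r_j$. Since $\psi$ is an isomorphism, $I$ is an order ideal if and only if $I' = \psi(I)$ is, and $I$ contains two adjacent numbers if and only if $I'$ contains $\psi(x)$ and $\psi(x+1)$ for some consecutive pair $x, x+1$ of $P_{2k+1,2k+3}$. I will use throughout the explicit order: by Definition \ref{defn-new-poset}, a subset $I' \subseteq P'_{2k+1,2k+3}$ is an order ideal exactly when, for every $(a_0,b_0)\in I'$, every $(a_1,b_1)\in P'_{2k+1,2k+3}$ with $a_1 \ge a_0$ and $b_1 \ge b_0$ also lies in $I'$.

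One direction is immediate from Lemma \ref{thm-small-numbers-in-p2k12k3}: if $l_{x_0}, r_{x_0} \in I'$ they equal $\psi(2x_0-1)$ and $\psi(2x_0)$, so $I$ contains the adjacent numbers $2x_0-1, 2x_0$; and if $r_{x_1}, l_{x_1+1} \in I'$ they equal $\psi(2x_1)$ and $\psi(2x_1+1)$, again adjacent. Hence a forbidden pair in $I'$ forces two adjacent numbers in $I$.

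For the converse, suppose $x, x+1 \in I$ and set $\psi(x)=(a,b)$, $\psi(x+1)=(a',b')$. Subtracting the representations of Lemma \ref{thm-pst-unique-repr} gives $1 = (a-a')(2k+1) + (b-b')(2k+3)$; since all coordinates lie in $[0,2k]$, I would first check that at most two solutions survive the bound $|a-a'|,|b-b'|\le 2k$, namely Case A with $(a',b') = (a-k-1,\,b+k)$ (forcing $a\ge k+1$, $b\le k-1$) and Case B with $(a',b') = (a+k+2,\,b-k-1)$ (forcing $b\ge k+1$, $a\le k-2$). The crux is then a reduction to minimal elements: using upward closure of $I'$, the minimal elements forced into $I'$ by a point $(a,b)$ are precisely the $l_i$ with $a-k \le i \le k-b$ and the $r_j$ with $a+1 \le j \le 2k-b$. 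Running this for both $\psi(x)$ and $\psi(x+1)$, I expect to find in Case A that the range of $l$-indices forced by $(a,b)$ and the range of $r$-indices forced by $(a',b')$ are the same interval $[a-k,\,k-b]$, so that $\{l_{x_0},r_{x_0}\} \subseteq I'$ for $x_0 = a-k$; and in Case B that the two ranges are offset by one, giving $\{r_{x_1}, l_{x_1+1}\} \subseteq I'$ for $x_1 = a+1$. A short check of the coordinate restrictions yields the required bounds $1 \le x_0 \le k$ and $1 \le x_1 < k$.

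The main obstacle is this last reduction. The temptation is to pick a single minimal element below each of $\psi(x)$ and $\psi(x+1)$, but such a pair need not be one of the two allowed shapes (an explicit pair with $b < k-1$ shows this naive choice fails). The right move is to track the \emph{entire} interval of dominated minimal elements; the clean coincidence of the two index intervals in Case A and their unit offset in Case B is exactly what produces a forbidden pair of the prescribed form. Ruling out the remaining Diophantine solutions and confirming the index bounds are routine once the coordinate inequalities from the two cases are in hand.
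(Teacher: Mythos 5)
Your proof is correct, but it follows a genuinely different route from the paper's. The paper never leaves $P_{2k+1,2k+3}$: since an order ideal there is closed under subtracting $2k+1$ (so long as the result stays positive and hence non-representable), an adjacent pair $x, x+1 \in I$ descends to an adjacent pair $x', x'+1 \in I$ with $x' \equiv x \pmod{2k+1}$ and $1 \le x' \le 2k+1$; because $2k+1 \notin P_{2k+1,2k+3}$ this forces $1 \le x' \le 2k-1$, and Lemma \ref{thm-small-numbers-in-p2k12k3} then reads off the forbidden pair at once. You instead work entirely in $P'_{2k+1,2k+3}$: the bounded Diophantine analysis of $1 = (a-a')(2k+1) + (b-b')(2k+3)$ leaves exactly your Cases A and B, and you track the full interval of minimal elements dominated by each of $\psi(x)$ and $\psi(x+1)$. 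I verified the steps you flag as routine: in Case A both index intervals equal $[a-k,\,k-b]$, which is nonempty and lies in $[1,k]$ since $a \ge k+1$, $b \ge 0$ and $a+b \le 2k$; in Case B the intervals $[a+1,\,2k-b]$ and $[a+2,\,2k+1-b]$ are nonempty because $a+b \le 2k-1$ (forced by $b \ge k+1$ together with Lemma \ref{p2k12k3-desc}) and are offset by one, with $x_1 = a+1$ satisfying $1 \le x_1 \le k-1$ since $0 \le a \le k-2$. So the forbidden pairs you name do lie in $I'$ with the required index bounds. Comparing the two approaches: the paper's mod-$(2k+1)$ descent is shorter and case-free, but it leans on the arithmetic of the numerical semigroup (non-representability is preserved under subtracting $2k+1$, and $2k+1 \notin P_{2k+1,2k+3}$); yours is more computational but purely coordinate-based, and it has the merit of exhibiting explicitly which forbidden pair occurs ($x_0 = a-k$ in Case A, $x_1 = a+1$ in Case B). Your remark that choosing a single minimal element under each of $\psi(x)$ and $\psi(x+1)$ would not suffice is also apt; the interval bookkeeping is the real content of your converse direction.
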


\begin{comment}
By introducing proper notation, writing is simplified MUCH more.
\end{comment}

\begin{proof}
Note that $2k+1 \not\in P_{2k+1, 2k+3}$. For otherwise, there would exist $a,b \in \NN$ with $2k+1 = F_{2k+1, 2k+3} - a(2k+1)-b(2k+3)$ and therefore, $F_{2k+1,2k+3}$ would be written as a linear combination of $2k+1$ and $2k+3$ with non-negative coefficients. The fact that $I$ is an order ideal implies that if $x, x+1 \in I$, there exists $1 \leq x' \leq 2k + 1$ such that $x' \equiv x \pmod{2k+1}$ and $x',x'+1 \in I$. Since $2k+1$ is not in $P_{2k+1, 2k+3}$, we have $1 \leq x' \leq 2k - 1$. Now the lemma immediately follows from Lemma \ref{thm-small-numbers-in-p2k12k3}.
\begin{comment}
Note: There was a better description above!!! Urgh!!!

First note that an integral solution to equation
\begin{equation}
\label{integral-equation}
x(2k+1) + y(2k+3) = 1
\end{equation}
with variables $d_a$ and $d_b$ is $(d_a, d_b) = (k + 1, -k)$, so the general solution of \ref{integral-equation} is $(d_a, d_b) = (k + 1 + l(2k+3), -k - l(2k+1))$.

We first observe the property of integer $x$ such that $x$ and $x + 1$
both lies in $P_{2k+1, 2k+3}$.
Since $x \in P_{2k+1, 2k+3}$, Theorem \ref{thm-pst-unique-repr} shows that $x$ can be uniquely represented as
$x = F - a(2k+1) - b(2k+3)$ where $F = (2k+1)(2k+3) - (2k+1)(2k+3)$ is the 
Frobenius number of $2k+1$ and $2k+3$ and $a, b \in \NN$. 
Likewise, if $x + 1$ lies in $P_{2k+1, 2k+3}$, then 
it can be represented as $x + 1 = F - a'(2k+1) - b'(2k+3)$ where $a', b' \in \NN$.
Therefore, $(d_a, d_b) = (a - a', b - b')$ is a solution to 
\ref{integral-equation} and $a - a' = k + 1 + l(2k+3)$, $b - b' = -k - l(2k+1)$ for some $l \in \ZZ$. Now we divide into two cases
whether $b \geq k$ or $b < k$.

If $b \geq k$, Theorem \ref{p2k12k3-desc} implies that
$k \leq b \leq 2k - 1$ and $0 \leq b' = b + k + l(2k+1) \leq 2k - 1$. 
For both conditions to hold, $l$ has to be be either $-1$ or $0$.
If $l = -1$, then $b' = b - k - 1$, so
$k + 1 \leq b \leq 2k - 1$ and $0 \leq b' \leq k - 2$.

$0 \leq a \leq 2k+1$. Also, by similar reasoning we have
$0 \leq a' \leq 2k+2$. Therefore

and 
By Theorem \ref{thm-pst-unique-repr}, two consecutive integers 
\end{comment}
\end{proof}

To establish the bijection stated in Theorem \ref{rephrase},
we first construct the maps from the poset $P'_{2k+1, 2k+3}$
to the poset $Q$, which is described as follows.

\begin{definition}
\label{Qposet}
We define $Q$ to be the poset 
$$
Q = \{ (a, b) \in \ZZ^2 \mid 1 \leq a - b \leq 2k + 2 \},
$$
whose partial order is given by requiring $(a, b) \ge (a', b')$ if $(a', b') = (a + 1, b)$ or $(a', b') = (a, b + 1)$. Define 
\begin{align*}
Q^+ &= \{ (a, b) \in Q \mid a + b \leq 2k \}, \text{ and} \\
Q^-  &= \{ (a, b) \in Q \mid a + b \geq 2k + 1 \}.
\end{align*}
\end{definition}
It is evident that $Q$ is a disjoint union of $Q^+$ and $Q^-$. Definition \ref{Qposet} implies that we cannot have $x \in Q^+$ and $y \in Q^-$ with $x \le y$. 
A Hasse diagram of $Q$ for $k = 6$ is in
the left side of Figure \ref{bijection-overview}.
We introduce the notion of \emph{height}, and \emph{balanced} order ideal of $Q$. Let $J$ denote an order ideal of $Q$ for the rest of the paper. 
\begin{definition}
Let the \emph{height} $h_J(p)$ of $J$ at position $p$ ($0 \leq p \leq 2k + 1$) be
the maximum value of $2k + 1 - a - b$ where $(a, b) \in J$ and $a - b = 2k + 2 - p$.
This is well-defined for all proper, non-empty order ideals $J$ of $Q$. Let the \emph{left height} of $J$ be the height of $J$ at position $0$, i.e., $h_J(0)$. Let the \emph{right height} of $J$ be the height of $J$ at position $2k + 1$, i.e., $h_J(2k+1)$. 

We say a proper, non-empty order ideal $J$ of $Q$ is \emph{balanced} if the following holds.
\begin{enumerate}
\item
$h_J(0) < 0$,
\item
$h_J(2k+1) \ge 0$,
 \item
$|h_J(0)+h_J(2k+1)| = 1$.
\end{enumerate}
Note that $h_J(0)$ is odd and $h_J(2k+1)$ is even.
\end{definition}

\begin{remark}
\label{help-understanding}
Let $J_p$ be the set of elements of $J$ at position $p$ with the induced poset structure, i.e., $J_p:= \{(a,b) \in J | a-b = 2k+2-p\}$. In particular, $J$ is disjoint union of the sets $J_p$. Then there exists $(a_p, b_p) \in J_p$ such that $h_J(p)=2k+1-a_p-b_p$. One can check without difficulty that $(a_p, b_p)$ is the maximal element of $J_p$, so a maximal element of $J$. An order ideal in a poset is uniquely determined by its maximal elements. Therefore if heights $h_J(0), h_J(1), \cdots, h_J(2k+1)$ are given, then $J$ is uniquely determined. 
\end{remark}
This observation leads to the following proposition, whose details are left to the reader.

\begin{proposition}
\label{sequence-correspondence}
Any proper, non-empty order ideal $J$ of $Q$ determines the sequence of heights $h_J(0), h_J(1), \cdots, h_J(2k+1)$. We have that $h_i = h_J(i) \equiv i+1 \pmod{2}$ and $|h_{i+1} - h_{i}| = 1$. Conversely, for any sequence of integers $h_0, h_1, \cdots, h_{2k+1}$
such that $h_i \equiv i+1 \pmod{2}$ and $|h_{i+1} - h_{i}| = 1$, there is a unique proper non-empty order ideal $J$ of $Q$
such that $h_J(i) = h_i$.

\begin{comment}
Maybe the following is not necessary?

Under this correspondence, saying that $J$ is balanced is equivalent to saying that
$h_0 < 0$, $h_{2k + 1} \geq 0$ and $|h_0 + h_{2k+1}| = 1$.
\end{comment}
\end{proposition}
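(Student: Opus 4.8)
The plan is to make the correspondence between an order ideal and its height sequence completely explicit by solving for the coordinates of the maximal element of $J$ in each column. For $0 \le p \le 2k+1$ I will use $J_p$ as in Remark \ref{help-understanding}, and record that a point $(a,b) \in Q$ sits at position $p$ and height $h$ exactly when $a-b = 2k+2-p$ and $a+b = 2k+1-h$, so that
\[
(a,b) = \Bigl( \tfrac{4k+3-p-h}{2}, \ \tfrac{p-1-h}{2} \Bigr).
\]
Since the right-hand side is a lattice point only when $p+h$ is odd, this single observation already forces $h \equiv p+1 \pmod 2$ for every element of $Q$, and hence for $h_J(p)$; so the parity relation requires no further work in either direction.

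For the forward direction I would first settle well-definedness. Because $J$ is non-empty and down-closed, the covers $(a,b+1)$ and $(a+1,b)$ of any $(a,b)\in J$ (which lie at positions $p+1$ and $p-1$ whenever they belong to $Q$) propagate membership outward, so every $J_p$ is non-empty. Because $J$ is \emph{proper}, there is an omitted element; as $Q\setminus J$ is an up-set, everything in its column above that element is omitted, bounding the height there, and the same propagation shows every height is finite. Each $J_p$ then has a maximal element $(a_p,b_p)$ realizing $h_J(p)$. To obtain $|h_{p+1}-h_p|=1$ I would compare neighbours: the cover $(a_p,b_p+1)\in J_{p+1}$ gives $a_{p+1}+b_{p+1}\le a_p+b_p+1$, hence $h_{p+1}\ge h_p-1$, while symmetrically $(a_{p+1}+1,b_{p+1})\in J_p$ gives $h_{p+1}\le h_p+1$; opposite parities then rule out equality, leaving difference exactly $1$.

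For the converse I would take a sequence $h_0,\dots,h_{2k+1}$ with the stated properties, define $(a_p,b_p)$ by the displayed formula (the parity hypothesis makes these lattice points of $Q$), and note that the step condition forces $(a_{p+1},b_{p+1})$ to equal either $(a_p-1,b_p)$ or $(a_p,b_p+1)$, i.e.\ consecutive generators differ by a single covering relation and trace a boundary path. Setting $J$ to be the order ideal generated by $\{(a_p,b_p)\}$, the crucial computation is that the highest element of column $p$ lying below a generator $(a_q,b_q)$ has height $h_q-|q-p|$; telescoping the step condition yields the Lipschitz bound $|h_q-h_p|\le|q-p|$, so $\max_q\bigl(h_q-|q-p|\bigr)=h_p$ is attained only at $q=p$ and $h_J(p)=h_p$ as desired. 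Finiteness of the $h_p$ gives $J\ne Q$, so $J$ is proper and non-empty, and uniqueness follows from Remark \ref{help-understanding}, since the heights pin down the maximal element of each column and an order ideal is determined by its maximal elements.

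I expect the main obstacle to be the equality $|h_{p+1}-h_p|=1$ and its converse bookkeeping: both rest on carefully tracking, through the covering relations of $Q$, how the maximal element of one column constrains that of the next, and on the elementary but essential estimate $|h_q-h_p|\le|q-p|$ that ensures the generators reproduce \emph{exactly} the prescribed heights rather than larger ones. The finiteness argument also deserves care, since $Q$ is infinite in the downward direction and an improper ideal would have infinite heights, which is precisely why the hypothesis that $J$ be proper cannot be dropped.
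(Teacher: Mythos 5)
Your proof is correct and takes essentially the approach the paper intends: the paper leaves this proposition's details to the reader, indicating only (via Remark \ref{help-understanding}) that the heights pin down the maximal element of $J$ at each position and that an order ideal is determined by its maximal elements. Your explicit coordinate formula, the covering-relation argument for $|h_{p+1}-h_p|=1$, the finiteness argument from properness, and the converse construction of $J$ from generators together with the Lipschitz bound $|h_q-h_p|\le|q-p|$ supply exactly the details the paper omits.
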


\begin{comment}
%No more picture??

Here is an example of a balanced order ideal of $Q$ when $k = 10$ (insert example here).
The height of the balanced order ideal are ().
\end{comment}

Define the maps $f_L$ from $L$ to $Q^+$ 
and $f_R$ from $R$ to $Q^-$ as follows.
\begin{definition}
Let the map $f_L \colon L \ra Q^+$ be $f_L((a, b)) = (a, b)$ and
the map $f_R \colon R \ra Q^-$ be $f_R((a, b)) = (3k + 1 - b, k - 1 - a)$.
\end{definition}
The maps are easily checked to be well-defined. Note that if $(a,b) \le (c,d)$ in $R$, then $f_R((a,b)) \ge f_R((c,d))$ in $Q$. That is, $f_R$ reverses the order. We also see that $f_L$ preserves the order. 

As explained in Section 2, we complete the proof of Theorem \ref{rephrase} by showing the following two theorems.

\begin{comment}
%It seems f_R is not inclusion of the poset in the categorical sense. For a poset $P$ in $R$, f_R(P) is not an order ideal of Q^-

Note that $f_L$ is an inclusion of the poset $L$ to $Q$,
and $f_R$ is an inclusion of the poset $R^{op}$ (the poset $R$ with its
ordering reversed) to $Q$.
\end{comment}

\begin{theorem}
\label{hard}
The correspondence
\begin{equation}
\label{mapmap}
I \mapsto J = f(I) := f_L(I \cap L) \cup (Q^- \setminus f_R(I \cap R))
\end{equation}
is a one-to-one correspondence between the order ideals $I$ of $P'_{2k+1, 2k+3}$ satisfying the 
condition in Lemma \ref{ideal-characterization} and 
the balanced order ideals $J$ of $Q$.
\end{theorem}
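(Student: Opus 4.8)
The plan is to show that the map $f$ in~\eqref{mapmap} is a well-defined bijection by exhibiting its inverse and checking that it matches the two characterizations (order ideals of $P'$ avoiding the forbidden adjacencies of Lemma~\ref{ideal-characterization} on one side, balanced order ideals of $Q$ on the other). First I would verify that $J = f(I)$ is genuinely an order ideal of $Q$. Since $Q$ is the disjoint union of $Q^+$ and $Q^-$ with no element of $Q^+$ above any element of $Q^-$, it suffices to check the ideal condition separately on each piece and across the interface. On $Q^+$ the set $f_L(I \cap L)$ is an order ideal because $f_L$ is an order isomorphism of $L$ onto $Q^+$ and $I \cap L$ is an ideal of $L$. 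On $Q^-$ the set $Q^- \setminus f_R(I \cap R)$ is an order ideal precisely because $f_R$ reverses order: the complement of the image of an up-set (equivalently, of a down-set under an order-reversing map) is a down-set. The only genuinely new thing to confirm is the interface: I must rule out the existence of $x \in Q^+ \setminus f_L(I\cap L)$ lying below some $y \in J \cap Q^-$, but Definition~\ref{Qposet} already forbids any relation $x \le y$ with $x \in Q^+$, $y \in Q^-$, so the interface imposes no constraint and $J$ is automatically an order ideal.

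Next I would translate the balanced condition into the height language of Proposition~\ref{sequence-correspondence}. The key computation is to express the heights $h_J(p)$ for $0 \le p \le 2k+1$ in terms of which minimal generators $l_i, r_j$ belong to $I$. Using Lemma~\ref{thm-small-numbers-in-p2k12k3} and the explicit formula for $f_R$, the left height $h_J(0)$ and right height $h_J(2k+1)$ should be read off from the largest $l_i$ and the largest $r_j$ present in $I$ (or their absence), so that the parity statements $h_J(0)$ odd and $h_J(2k+1)$ even follow from the parity bookkeeping in Proposition~\ref{sequence-correspondence}. I would then show that the three balanced conditions $h_J(0) < 0$, $h_J(2k+1) \ge 0$, and $|h_J(0) + h_J(2k+1)| = 1$ are exactly equivalent to the forbidden-adjacency condition of Lemma~\ref{ideal-characterization}, i.e.\ that $I$ contains neither a pair $l_{x_0}, r_{x_0}$ nor a pair $r_{x_1}, l_{x_1+1}$. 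This is where the two reflections and the ``$1$ sits one step up-right of $2$'' assembly from Step~3 of the overview get cashed out numerically, and I expect the alignment constant $3k+1-b, k-1-a$ in $f_R$ to be exactly what makes the adjacency constraint collapse to the single equation $|h_J(0)+h_J(2k+1)| = 1$.

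Finally I would construct the inverse map explicitly: given a balanced order ideal $J$, set $I\cap L = f_L^{-1}(J\cap Q^+)$ and $I \cap R = f_R^{-1}(Q^- \setminus J)$, and verify that $I = (I\cap L) \cup (I \cap R)$ is an order ideal of $P'_{2k+1,2k+3}$ (again the two pieces are ideals of $L$ and $R$ respectively, and $L, R$ have the disjoint-union structure of Lemma~\ref{p2k12k3-desc} with no cross relations that could be violated). That $f$ and this candidate are mutually inverse is then formal from $f_L, f_R$ being bijections, and the balanced $\Leftrightarrow$ forbidden-adjacency equivalence from the previous paragraph guarantees the two domains match up.

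The main obstacle I anticipate is the second paragraph: carefully computing the heights $h_J(0)$ and $h_J(2k+1)$ from the data of $I$ and proving that the numerical equivalence between ``balanced'' and the Lemma~\ref{ideal-characterization} condition holds \emph{exactly}, with the right parities and the sharp bound $|h_J(0)+h_J(2k+1)|=1$ rather than some weaker inequality. Everything else (well-definedness, the order-ideal checks, the inverse) is structural and should follow cleanly once the order-preserving/order-reversing properties of $f_L, f_R$ and the disjoint-union structure of $Q = Q^+ \sqcup Q^-$ are in hand.
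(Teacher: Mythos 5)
There is a genuine gap, and it sits exactly where you declared the problem trivial. You have the order structure of $Q$ backwards: by Definition~\ref{Qposet}, $(a,b) \ge (a+1,b)$ and $(a,b) \ge (a,b+1)$, so moving to larger coordinates goes \emph{down} in the poset, which means $Q^+$ (where $a+b \le 2k$) sits \emph{above} $Q^-$ (where $a+b \ge 2k+1$). The paper's remark that one cannot have $x \in Q^+$, $y \in Q^-$ with $x \le y$ rules out only the harmless direction; elements of $Q^-$ certainly do lie below elements of $Q^+$ (e.g.\ $(k+1,k-1) \in Q^+$ covers $(k+2,k-1)$ and $(k+1,k)$, both in $Q^-$). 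Consequently the dangerous interface case is: some $y \in f_R(I\cap R) \subseteq Q^-$ lying below some $x \in f_L(I\cap L)$, which would make $J$ fail to be downward closed. This is not vacuous, and it is precisely where the paper uses the condition of Lemma~\ref{ideal-characterization}: a covering pair $e_i \in f_L(I\cap L)$, $e_{i+1} \in f_R(I\cap R)$ forces $e_i = f_L(l_{x_0})=(k+x_0,k-x_0)$, hence $e_{i+1}=(k+x_0+1,k-x_0)$ or $(k+x_0,k-x_0+1)$, hence $r_{x_0} \in I$ or $r_{x_0-1}\in I$ alongside $l_{x_0} \in I$ --- exactly the forbidden adjacency. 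A concrete counterexample to your claim: for $k=1$, the set $I=\{(2,0),(0,1)\}=\{l_1,r_1\}$ is a bona fide order ideal of $P'_{3,5}$ violating the lemma's condition, and $f(I)$ contains $(2,0)$ but not $(3,0)=f_R((0,1))$, which lies below $(2,0)$; so $f(I)$ is not an order ideal at all. Thus without the adjacency hypothesis the map does not even land in order ideals, whereas your plan treats the order-ideal property as ``structural'' and tries to cash in the adjacency condition only later, as an equivalence with balancedness. The roles are in fact the opposite of what you propose: in the paper, balancedness of $f(I)$ holds for \emph{every} order ideal $I$ of $P'_{2k+1,2k+3}$ (part (2) of the paper's proof never invokes the adjacency condition; it only uses that $I$ is an ideal, via the diagonal elements $(\alpha+1,\alpha)$ and $(\beta,\beta)$), while the adjacency condition is what makes $f(I)$ an order ideal.

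A secondary but real error of the same kind occurs in your inverse-map paragraph: it is false that $L$ and $R$ have ``no cross relations'' in $P'_{2k+1,2k+3}$. Along the diagonal, $(a,a) \in R$ covers $(a+1,a) \in L$, and $(b+1,b) \in L$ covers $(b+1,b+1) \in R$, so downward closure of $f_L^{-1}(J\cap Q^+) \cup f_R^{-1}(Q^-\setminus J)$ across the $L$--$R$ interface is a genuine verification --- and it is exactly where balancedness of $J$ is needed (e.g.\ $(a,a) \notin$ the image forces $h_J(0) \le 2a-2k-1$, and one needs $h_J(2k+1) \ge 2k-2a$ to conclude $(a+1,a) \in I$, which is precisely $|h_J(0)+h_J(2k+1)| = 1$ doing its job). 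Also, two smaller inaccuracies: $f_L$ is not an order isomorphism of $L$ \emph{onto} $Q^+$ (the set $Q^+$ is infinite and contains points with negative coordinates such as $(2k+1,-1)$; $L$ is only a finite down-set of it), and similarly $f_R(R)$ is a proper up-set of $Q^-$, not all of it. These can be repaired, but the interface argument and its inverse counterpart are the mathematical core of the theorem, and your proposal dismisses both.
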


\begin{proof}
There are three things that we need to check:
\begin{enumerate}
\item
$J = f(I)$ is an order ideal,
\item
$J$ is balanced,
\item
the map is one-to-one and onto.
\end{enumerate}

(1) First we show that $J=f(I)$ is an order ideal. Let 
\begin{align*}
J^+ &:= J \cap Q^+ = f_L(I \cap L), \\
J^- & := J \cap Q^- = Q^- \setminus f_R(I \cap R).
\end{align*}
We argue by contradiction: Suppose that $J$ is not an order ideal. Since $f_L$ preserves the poset structure, $J^+$ is an order ideal of $Q^+$. Likewise, $f_R$ reverses the order, so $J^-$ is an order ideal of $Q^-$. Hence, there must exist $x \in J^+$ and $y \in Q^-\setminus J^-$ such that $x \ge y$. By the structure of $Q$,
there is a sequence of elements $e_0 = x, e_1, \cdots, e_u = y$ in $Q$ such that
$e_j$ covers $e_{j+1}$ and there exists a unique $i$ such that $e_i \in J^+$ and $e_{i+1} \in Q^-\setminus J^- = f_R(I \cap R)$. Then $e_i$ is a minimum of $J^+$. Thus, $e_i = f_L(l_{x_0}) = (k+x_0, k-x_0)$ for some $1 \leq x_0 \leq k$ (and $l_{x_0} \in I$). Then it follows $e_{i+1} = (k+x_0+1,k-x_0)$ or $e_{i+1}= (k+x_0, k-x_0+1)$ since $e_i$ covers $e_{i+1}$. Together with the fact that $e_{i+1} \in f_R(I \cap R)$, this would imply that    $r_{x_0} \in I$ or $r_{x_0-1} \in I$ by the definition of $f_R$. However, this is impossible by Lemma \ref{ideal-characterization}. Therefore $J$ is an order ideal.

(2) Now we check that $J$ is balanced. Note that  $(k+1,k) \in Q^-$ is not in the image of $f_R$, so  $(k+1,k) \in J$. This implies that $h_J(2k+1) \ge 0$. We have $(2k+1,-1) \in Q^+\setminus f_L(I\cap L)$, so in particular $(2k+1,-1) \not\in J$. By Remark \ref{help-understanding}, we have $h_J(0) < 1$. Since $h_J(0)$ is odd, it must be negative.  

Now we show $|h_J(0) + h_J(2k+1)| = 1$.  Suppose that $(k,k-1) \not\in I$. Then clearly, $h_J(2k+1) =0$. Moreover $(k-1,k-1) \not\in I$ by the definition of order ideals. Hence $(2k+2,0)=f_R((k-1,k-1)) \in J$, so $h_J(0) = -1$, whence $|h_J(0) + h_J(2k+1)| = 1$. Suppose that $(k,k-1) \in I$. Let $\alpha$ be the smallest non-negative integer so that $(\alpha+1, \alpha) \in I$. Remark \ref{help-understanding} shows that $h_J(2k+1)=2k-2\alpha$. Let $\beta$ be the largest integer such that $(\beta,\beta) \in P'_{2k+1,2k+3} \setminus I$. Then $(3k+1-\beta, k-1-\beta) = f_R((\beta, \beta)) \in J$. By the maximality of $\beta$, we have $h_J(0) = 2k+1-(3k+1-\beta)-(k-1-\beta) = -2k + 2\beta + 1$. We also have $\beta = \alpha$ or $\beta = \alpha-1$ by the fact that $I$ is an order ideal. Then it follows immediately that $|h_J(0) + h_J(2k+1)| = 1$.

\begin{comment}
%%% Jineon's argument

Let $q$ be the topmost element of $I$
in the column right near the cut line's right side.
Formally, we define $q$ to be the element $(\aa, \aa)$ 
where $\aa \in \NN$ is the smallest value such that $(\aa, \aa) \in I$.
If there is no such $\aa$ we formally define $\aa$ to be equal to $k$.
Likewise, let $p$ be the topmost element of $I$
in the column right near the cut line's left side.
Formally, we define $p$ to be the element $(\bb, \bb)$
where $\bb \in \NN$ is the smallest value such that
$(\beta + 1, \beta) \in I$.
If there is no such $\bb$ we formally define $\bb$ to be equal to $k$.

take the smallest value of $l$ so
that $(l, l) \in R$ is in $I$, so that $(l, l)$ is the topmost element of
$I$ in the column right next to the right side of the cut line in the picture. If there is no such $l$, we define $l = k$. 
Likewise, 
Then there are two cases: either $(l + 1, l) \in I$ and $(l, l - 1) \notin I$, or $(l, l - 1) \in I$ and $(l - 1, l - 2) \notin I$ (Add another picture).
In first case, the map $f_R$ takes $(l, l) \in R$ to $(3k + 1 - l, k - 1 - l)$ and
\end{comment}

(3) Any balanced order ideal $J$ of $Q$ gives rise to an order ideal $I = f_L^{-1}(J \cap Q^+) \cup f_R^{-1}(Q^- \setminus J)$, which satisfies the condition in Lemma \ref{ideal-characterization}. Clearly, this defines the inverse of the map \eqref{mapmap}. Therefore, it follows that the map \eqref{mapmap} is a one-to-one correspondence.
\end{proof}

\begin{theorem}
\label{bijection-dyck-path}
The map
$$
J \mapsto P = \{(p, h_J(p) - h_J(0)) \mid 0 \leq p \leq 2k + 1\} 
$$
is a one-to-one correspondence between balanced order ideals $J$ of $Q$
and lattice paths $P$ from $(0, 0)$ to $(2k + 1, d)$, where $d$ varies over positive integers.
\end{theorem}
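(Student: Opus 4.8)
The plan is to build the correspondence directly from Proposition \ref{sequence-correspondence}, which already reduces a proper non-empty order ideal $J$ of $Q$ to its height sequence $h_0, h_1, \ldots, h_{2k+1}$ with $h_i \equiv i+1 \pmod 2$ and $|h_{i+1} - h_i| = 1$. The key observation is that the map $p \mapsto (p, h_J(p) - h_J(0))$ is exactly the lattice path whose step from abscissa $p$ to $p+1$ is an up step $U$ when $h_{p+1} - h_p = +1$ and a down step $D$ when $h_{p+1} - h_p = -1$; the condition $|h_{i+1} - h_i| = 1$ guarantees that consecutive points differ by $(1, \pm 1)$, so $P$ is genuinely a lattice path. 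It starts at $(0, h_J(0) - h_J(0)) = (0,0)$ and ends at $(2k+1, h_J(2k+1) - h_J(0))$, which has length $2k+1 = s$, matching the target.

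First I would verify that for a \emph{balanced} $J$ the endpoint ordinate $d = h_J(2k+1) - h_J(0)$ is a positive integer: the balanced conditions give $h_J(0) < 0$ with $h_J(0)$ odd and $h_J(2k+1) \ge 0$ with $h_J(2k+1)$ even, and $|h_J(0) + h_J(2k+1)| = 1$. Since $h_J(2k+1) \ge 0$ and $-h_J(0) > 0$, the difference $d = h_J(2k+1) - h_J(0)$ is strictly positive, so $P$ indeed lands at $(s, d)$ with $d > 0$. (I would also remark that $d$ is odd, being the difference of an even and an odd integer, consistent with a length-$s$ path for odd $s$ ending at odd height.) This shows the map lands in the claimed target set.

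Next I would exhibit the inverse and check it is well-defined. Given a lattice path $P$ of length $s = 2k+1$ from $(0,0)$ to $(2k+1, d)$ with $d > 0$, write its ordinates as $0 = y_0, y_1, \ldots, y_{2k+1} = d$ with $|y_{i+1} - y_i| = 1$. I want to recover the unique height sequence $h_i = y_i + c$ for a constant shift $c$, and then apply the converse direction of Proposition \ref{sequence-correspondence} to obtain $J$. The shift $c = h_J(0)$ is forced to be the unique odd integer making the balanced conditions hold: I would argue that setting $c$ so that $h_0 = c$ is odd and negative and $h_{2k+1} = d + c$ is even and nonnegative with $|c + (d+c)| = 1$ determines $c$ uniquely. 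Concretely, from $|2c + d| = 1$ and $c < 0 \le d + c$ one solves $2c + d = 1$ (the sign is forced since $c<0$ forces $2c+d$ small, and parity forces it to be $\pm1$, while $d+c \ge 0$ selects the value), giving $c = (1-d)/2$, which is a nonpositive integer because $d \ge 1$ is odd, and $c < 0$ precisely when $d > 1$; I would double-check the boundary case $d = 1$ against the definition to confirm $h_0 < 0$ is satisfied or handle it as the degenerate minimal ideal. Once $c$ is pinned down, the parity condition $h_i \equiv i+1 \pmod 2$ must be confirmed to hold automatically, which follows because $y_i \equiv i \pmod 2$ (each step flips parity from $y_0 = 0$) and $c$ is odd, so $h_i = y_i + c \equiv i + 1 \pmod 2$ as required.

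The main obstacle I anticipate is the uniqueness of the shift $c$ and the careful bookkeeping of the three balanced conditions: one must show that exactly one integer shift $c$ turns an arbitrary positive-endpoint path into a balanced height sequence, neither more nor fewer, and in particular that the parity constraints ($h_0$ odd, $h_{2k+1}$ even) are consistent with the single equation $|h_0 + h_{2k+1}| = 1$ rather than over- or under-determining $c$. Everything else is bijective bookkeeping: the two directions are manifestly inverse since $h_i \mapsto h_i - h_0$ and its inverse $y_i \mapsto y_i + c$ compose to the identity once $c$ is correctly identified, and Proposition \ref{sequence-correspondence} supplies the bijection between height sequences and ideals $J$. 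I would conclude by assembling these pieces: balanced $J \leftrightarrow$ admissible height sequence $\leftrightarrow$ positive-endpoint lattice path, which is the desired one-to-one correspondence.
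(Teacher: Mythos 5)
Your overall strategy --- reduce to height sequences via Proposition \ref{sequence-correspondence}, then recover $J$ from the path by a uniquely determined vertical shift $c = h_J(0)$ --- is exactly the paper's strategy, but your determination of the shift is wrong, and this is the crux of the bijectivity argument. You claim the constraints force $2c + d = 1$, i.e.\ $c = (1-d)/2$, asserting that ``$d + c \geq 0$ selects the value.'' It does not: both roots $c = (1-d)/2$ and $c = (-1-d)/2$ of $|2c+d| = 1$ satisfy $d + c \geq 0$ (they give $d + c = (d+1)/2$ and $(d-1)/2$ respectively), so neither inequality distinguishes them. What actually selects the sign is parity: since $y_i \equiv i \pmod{2}$ along the path, the shifted sequence $h_i = y_i + c$ satisfies $h_i \equiv i+1 \pmod{2}$ --- which Proposition \ref{sequence-correspondence} requires before it yields any order ideal at all --- if and only if $c$ is odd. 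Now $(1-d)/2$ is odd precisely when $d \equiv 3 \pmod{4}$, and $(-1-d)/2$ is odd precisely when $d \equiv 1 \pmod{4}$. So your formula picks the wrong shift for \emph{every} $d \equiv 1 \pmod{4}$, not just at the boundary case $d=1$ that you flagged: for $d = 5$, your $c = -2$ produces a sequence of the wrong parity, corresponding to no ideal of $Q$ whatsoever, while the correct shift is $c = -3$. Consequently your later assertion that ``$c$ is odd'' is false under your own formula, and the parity check you defer does not ``hold automatically.''

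The repair is what the paper does: take the shift to be $c = -2\lfloor (d-1)/4 \rfloor - 1$ (equivalently, $2c+d = 1$ when $d \equiv 3 \pmod 4$ and $2c + d = -1$ when $d \equiv 1 \pmod 4$), which is always odd, always negative, and satisfies $d + c \geq 0$ and $|2c+d| = 1$. Then Proposition \ref{sequence-correspondence} produces a unique balanced ideal $J_P$ with $h_{J_P}(i) = y_i + c$ mapping to $P$, giving surjectivity; and injectivity follows because any balanced $J$ mapping to $P$ has its shift $h_J(0)$ forced by oddness, $h_J(0) < 0 \leq h_J(2k+1)$, and $|h_J(0) + h_J(2k+1)| = 1$, so $h_J = h_{J_P}$ and $J = J_P$. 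With that one correction your argument goes through and coincides with the paper's proof.
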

\begin{proof}
Proposition \ref{sequence-correspondence} implies that this map is well-defined. Take any path $P$ from $(0, 0)$ to $(2k + 1, d)$ for some positive $d$ (it follows that $d$ is odd). Let $p_i$ be the $y$-coordinate of the $i \textsuperscript{th}$ point in path $P$, that is, $(i, p_i) \in P$. Let $J_P$ be a subset of $Q$ so that $h_{J_P}(i) =  p_i - 2 \lfloor (d -1) / 4 \rfloor - 1$. One can see $J_P$ is indeed an order ideal by the criterion in Proposition \ref{sequence-correspondence}. Moreover, it is straightforward to check that $J_P$ is balanced and the correspondence takes $J_P$ to $P$. Hence the map is onto. 

Conversely, if a balanced order ideal $J$ maps to $P$, then $h_J(i)-h_J(0) = p_i$. Since $|h_J(0) + h_J(2k+1)| = 1$, it follows that $h_J(i) = h_{J_P}(i)$ for all $0 \le i \le 2k+1$. Proposition \ref{sequence-correspondence} implies that $J = J_P$, completing the proof.  
\begin{comment}
%The above proof looks simpler; any comments?

First we observe that the map is well-defined by Proposition \ref{sequence-correspondence}.
Now we prove that the correspondence is one-to-one.
Take any path $P$ from $(0, 0)$ to $(2k + 1, d)$ for some positive $d$
(it follows that $d$ is odd).
Let $h_i$ be the $y$-coordinate of the $i \superscript{th}$ point in path $P$, that is, $(i, h_i) \in P$.
First we prove that if some ideal $J$ maps to $P$ under the given map,
then it has to be the ideal $J_P$ with heights $h_{J_P}(i) = h_i - 2 \lfloor (d -1) / 4 \rfloor - 1$.
Then we show that this ideal $J_P$ indeed maps to $P$ under the correspondence.

Assume that a balanced order ideal $J$ with left height $m = h_J(0)$ and right height $n = h_J(2k + 1)$
maps to $P$ under the correspondence.
Then 
% d = 3
% x = -2 y = 1
% d = 5
% x = -2 y = 3
$m = - 2 \lfloor (d -1) / 4 \rfloor - 1$ and $n = d - 2 \lfloor (d -1) / 4 \rfloor - 1$ 
are uniquely determined by the fact that
$m$ is odd and $n$ is even satisfying $d = n - m$ and $|m + n| = 1$.
By the definition of the correspondence, we have $h_J(i) = h_i  - 2 \lfloor (d -1) / 4 \rfloor - 1$.
So by Proposition \ref{sequence-correspondence}, such $J$ has to be unique.

Now we check that a balanced order ideal $J$ with $h_J(i) = h_i - 2 \lfloor (d -1) / 4 \rfloor - 1$ indeed exists.
This can be done by checking the criterion in Proposition \ref{sequence-correspondence}.
Also, it is easy to see that such $J$ maps to $P$ under the given map. Therefore, we are done.
\end{comment}
\end{proof}

\section*{Acknowledgement}
We are very grateful to Nathan Kaplan for many valuable comments. We also thank Dennis Eichhorn for discussions.

\bibliographystyle{abbrv}
\bibliography{ref}

\begin{comment}

\end{comment}

\end{document}